    \newcommand*{\qrr@gobblenexttocentry}[5]{}
    \newcommand*{\qrr@gobblenexttocentry}[4]{}
\newcommand*{\addsubsection}{%
    \addtocontents{toc}{\protect\qrr@gobblenexttocentry}%
    \subsection}
\DeclareMathOperator{\Cl}{Cl}
\DeclareMathOperator{\Pic}{Pic}
\DeclareMathOperator{\IP}{\mathbb{P}}
\DeclareMathOperator{\Q}{\mathbb{Q}}
\DeclareMathOperator{\C}{\mathbb{C}}
\DeclareMathOperator{\Z}{\mathbb{Z}}
\DeclareMathOperator{\Sing}{Sing}
\DeclareMathOperator{\Bs}{Bs}
\DeclareMathOperator{\Jac}{Jac}
\DeclareMathOperator{\Socle}{Socle}
\DeclareMathOperator{\eps}{\epsilon}
\DeclareMathOperator{\Proj}{Proj}
\DeclareMathOperator{\rk}{rk}
\newtheorem{thm}{Theorem}[section]
\newtheorem{lem}[thm]{Lemma}
\newtheorem{prop}[thm]{Proposition}
\newtheorem{cor}[thm]{Corollary}
\newtheorem{conj}[thm]{Conjecture}
\theoremstyle{definition}
\newtheorem{Def}[thm]{Definition}
\newtheorem{remark}[thm]{Remark}
\title{Ciliberto--Di Gennaro conjecture for sextic hypersurfaces}
\author{Ksenia Kvitko}
\address{HSE University, Russian Federation}
\email{ksenia.kvitko@yahoo.com}
\begin{document}
\begin{abstract}
The Ciliberto--Di Gennaro conjecture addresses the factoriality of three-dimensional nodal hypersurfaces, and their geometric properties. We prove this conjecture for hypersurfaces of degree 6 by adapting a recent technique due to R. Kloosterman.
\end{abstract}

\maketitle

\tableofcontents 

\section{Introduction}

Let $X$ be a hypersurface in $\IP^4$ of degree $d$ that has only ordinary double points (nodes) as singularities. Such a hypersurface is called \emph{nodal}. One could ask whether it is factorial or equivalently $\Q$-factorial. This property has important implications for the study of rationality (see \cite{cheltsov2005birationally}). C.~Ciliberto and V. Di Gennaro formulated in \cite{ciliberto2004factoriality} the following conjecture.

\begin{conj}\label{conj} Let $X\subset\IP^4$ be a nodal hypersurface of degree $d$ with at most $2(d-2)(d-1)$ singular points.
Then one of the following holds:
\begin{enumerate}
	\item[\emph{(1)}] $X$ is factorial;
	\item[\emph{(2)}] $X$ contains a plane and has at least $(d-1)^2$ nodes;
	\item[\emph{(3)}] $X$ contains a quadric surface and has exactly $2(d-2)(d-1)$ nodes.
\end{enumerate}
\end{conj}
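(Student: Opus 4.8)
The plan is to recast factoriality as a condition-counting statement for the nodes and then, by adapting Kloosterman's technique, to force the nodes onto a plane or a quadric as soon as factoriality fails. Throughout put $d=6$, $\Sigma=\Sing(X)$, and $t=2d-5=7$. By the classical defect criterion for nodal threefolds (Clemens, Cheltsov), $X$ is $\Q$-factorial precisely when its defect vanishes, and this defect equals the failure of $\Sigma$ to impose independent conditions on the linear system $H^0(\IP^4,\mathcal{O}(t))$ of septics; equivalently it is the dimension of the cokernel of the evaluation map $H^0(\IP^4,\mathcal{O}(7))\to\bigoplus_{p\in\Sigma}\C_p$. I would therefore assume $X$ is not factorial, so that $\Sigma$ fails to impose independent conditions on septics, and work to deduce alternative (2) or (3).

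The decisive input is a Cayley--Bacharach property for the nodes with respect to $\mathcal{O}(t)$: every septic through all but one node passes through the last one. This is exactly the degree predicted by the extremal plane case, where the nodes cut on a plane $\Pi=V(x_3,x_4)$ form the complete intersection of two quintics $G|_\Pi,H|_\Pi$ (writing $F=x_3G+x_4H$), hence $(d-1)^2=25$ points already satisfying $\mathrm{CB}(2\cdot5-3)=\mathrm{CB}(7)$ inside $\Pi\cong\IP^2$. Establishing the global statement is where I would import Kloosterman's method, reading the required self-duality and quantitative bounds off the graded structure (Gorenstein duality at the nodes and the Betti numbers of the Jacobian ring). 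The force of the combination is that a zero-dimensional scheme which both satisfies $\mathrm{CB}(t)$ and fails to impose independent conditions in degree $t$ is extremely rigid: tracking the Hilbert function $h_\Sigma$ against the positivity of the defect, a large subset of $\Sigma$ is confined to a hyperplane, a plane, or a quadric surface.

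To finish I would calibrate the two extremal geometries against the bound $|\Sigma|\le 2(d-2)(d-1)=40$. A plane contained in $X$ produces, as above, at least $(d-1)^2=25$ nodes, giving alternative (2). A quadric surface $Q\subset X$, via the parallel residuation $F=x_4A+qB$ with $\deg A=d-1$ and $\deg B=d-2$, produces the extremal count $2(d-2)(d-1)=40$, giving alternative (3). The remaining task is to show these are the only possibilities: with at most $40$ nodes, a Cayley--Bacharach configuration of positive defect cannot be spread out over all of $\IP^4$, so the structure step must terminate in a plane or an irreducible quadric actually contained in $X$, and the node counts must then meet the stated thresholds.

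I expect the main obstacle to lie in the sharp intermediate range of this case analysis rather than in the factoriality reduction. Because the bound $2(d-2)(d-1)=40$ is attained, the structure theorem has to be quantitatively tight: one must exclude the spurious low-degree surfaces that a naive Hilbert-function estimate allows --- reducible or singular quadrics, cones, and unions of planes --- and prove that only a genuine plane (forcing $\ge 25$ nodes) or an irreducible quadric (forcing exactly $40$ nodes) saturates the inequalities. Carrying Kloosterman's graded bounds through this boundary case, and matching them with the Bézout counts on $\Pi$ and $Q$, is the crux of the argument.
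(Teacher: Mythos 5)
There is a genuine gap: your proposal correctly sets up the endpoints of the argument but omits the entire middle, which is where the content of the paper for $d=6$ lives. You reduce non-factoriality to the failure of $\Sing(X)$ to impose independent conditions on septics (this matches the paper's use of Dimca's formula $\delta(X)=\#\Sing(X)-h_J(2d-5)$), and you correctly guess that the conclusion should come from a rigidity/structure statement forcing the nodes into a special position. But the step ``a CB$(7)$ configuration of positive defect with at most $40$ points must be confined to a plane or a quadric'' is exactly the theorem to be proved, and you defer it to ``import Kloosterman's method.'' The difficulty is that Kloosterman's method, as it stands, does \emph{not} close this step for $d=6$: his argument shows that either $\Sing(X)$ contains a complete intersection of multidegree $(1,1,d-1,d-1)$ or $(1,2,d-2,d-1)$ (the case $h_I(d-4)\leqslant 2d-7$ for the auxiliary artinian Gorenstein ring $S/I$ of socle degree $2d-4$), or else $\#\Sing(X)>2(d-2)(d-1)$ --- and the latter inequality fails to be strict for $d=6$, leaving six borderline $h$-vectors such as $(1,3,6,6,6,6,6,3,1)$, $(1,3,6,7,6,7,6,3,1)$, $(1,4,6,6,6,6,6,4,1)$, etc., each summing to at most $40$. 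The paper's actual contribution is the elimination of these: four are killed by unimodality and Stanley's characterization of Gorenstein $h$-vectors with $h_1\leqslant 3$ (Corollary \ref{136676631}), and the remaining two by a base-locus argument (Proposition \ref{new}) showing the ideal would contain a complete intersection of socle degree too small to be compatible with $S/I$ being Gorenstein of socle degree $2d-4$. None of this appears in your proposal, and your own closing paragraph concedes that this boundary analysis is ``the crux.''

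Two smaller issues. First, the Cayley--Bacharach property CB$(2d-5)$ for the nodes is asserted without proof; the paper does not need it in that form --- it only needs $h_J(2d-4)>0$, after which a codimension-one subspace $W\supset \overline J_{2d-4}$ is chosen and the Gorenstein ring is built by the inverse-system construction \eqref{construction}, so the self-duality is manufactured rather than deduced. Second, your final ``calibration'' runs the implication in the wrong direction: the node counts in alternatives (2) and (3) are not consequences of $X$ containing a plane or quadric, but of $\Sing(X)$ containing a complete intersection of multidegree $(1,1,5,5)$ (hence $\geqslant 25$ points) or $(1,2,4,5)$ (hence $\geqslant 40$ points, so exactly $40$ by hypothesis); the containment of the plane or quadric in $X$ is then derived separately, by writing $F=\sum\varphi_i f_i$, showing each $\varphi_i$ vanishes on the complete intersection (Proposition \ref{local_coordinates_and_nodes}), and running a degree count to force $F\in(f_1,f_2)$ with $\deg f_1,\deg f_2\leqslant 2$. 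This residuation step is also missing from your outline.
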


Conjecture \ref{conj} was proved for $d=3$ by H.~Finkelnberg and J.~Werner \cite{finkelnberg1989small}, for $d=4$ by I.~Cheltsov and C.~Shramov \cite{cheltsov2006nonrational,shramov2007factorial}, and for $d\geqslant 7$ by R.~Kloosterman in \cite{kloosterman2022maximal}. In addition, it was shown in \cite{cheltsov2010factoriality,cheltsov2010conjecture} that a non-factorial nodal threefold $X\subset \IP^4$ of degree $d$ must have at least $(d-1)^2$ nodes; moreover, if equality holds then $X$ contains a plane.

In this paper we present a proof of the Ciliberto--Di Gennaro conjecture for sextic hypersurfaces using the method from \cite{kloosterman2022maximal}.

\begin{thm}\label{main}
	Conjecture \ref{conj} holds for $d=6$.
\end{thm}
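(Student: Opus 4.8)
The plan is to translate factoriality into a statement about the conditions the nodes impose on a fixed linear system, and then to extract the required geometry from the failure of those conditions.

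\textbf{Reduction to a conditions statement.} First I would invoke the standard criterion that a nodal threefold $X\subset\IP^4$ of degree $d$ is factorial (equivalently $\Q$-factorial) if and only if its set of nodes $Z=\Sing(X)$ imposes independent conditions on homogeneous forms of degree $2d-5$; for $d=6$ this is degree $7$. Thus $X$ is non-factorial precisely when the defect is positive, i.e. $\dim\big(\mathrm{Im}(H^0(\mathcal{O}_{\IP^4}(7)) \to H^0(\mathcal{O}_Z))\big) < \mu$, where $\mu = \#Z$. By the results recalled in the introduction I may assume from the outset that $X$ is non-factorial with $(d-1)^2 = 25 \le \mu \le 2(d-2)(d-1) = 40$, so the goal becomes: produce either a plane or a quadric contained in $X$ carrying the prescribed number of nodes.

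\textbf{Cayley--Bacharach input.} The decisive structural fact is that $Z$ is not an arbitrary point set: it is the common zero locus of the five partial derivatives $\partial f/\partial x_i$, which are forms of degree $d-1=5$. Choosing four sufficiently general forms in the Jacobian ideal cuts out a complete intersection $W = V(g_1,g_2,g_3,g_4)$ of four quintics, an arithmetically Gorenstein zero-dimensional scheme of degree $(d-1)^4$ and socle degree $\sigma = 4(d-1)-5 = 15$ containing $Z$. Because $W$ satisfies the Cayley--Bacharach property, its subscheme $Z$ and the residual scheme $\mathrm{Res}_W(Z)$ are linked, and the failure of $Z$ to impose independent conditions in degree $2d-5$ is governed, through this linkage, by the complementary degree $\sigma - (2d-5) = 8$. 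This is the engine of Kloosterman's technique: the positive defect is converted into the existence of a genuinely lower-dimensional locus, namely a surface $S\subset\IP^4$ of controlled degree passing through a large subset $Z_0\subseteq Z$ of the nodes.

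\textbf{From the surface to the plane or quadric.} I would then bound $\deg S$. Standard bounds (Bézout and adjunction) on the number of nodes of $X$ that can lie on a surface $S\not\subset X$ of given degree, combined with $\mu\le 40$, force $S$ to be an irreducible surface of small degree which must in fact be a component of $X$. If $S$ is a plane then $X$ contains a plane and I recover case (2), verifying $\mu \ge (d-1)^2 = 25$ by analysing the residual quintic in $X = S\cup X'$ and the nodes created along $S\cap X'$. If $S$ is a quadric I recover case (3), where the nodes lie on the intersection of the quadric with the residual quartic and number exactly $2(d-2)(d-1)=40$.

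\textbf{Main obstacle.} The crux, and the reason $d=6$ falls outside Kloosterman's range $d\ge 7$, is that the numerical inequalities bounding $\deg S$ in the previous step are tight at $d=6$. For $d\ge 7$ there is ample slack between $2(d-2)(d-1)$ and the number of nodes that a cubic, quartic or quintic surface (or a degenerate configuration such as a union of planes or a complete intersection of two quadrics) can force into special position, so such surfaces are excluded at once. At $d=6$ this slack collapses, and I expect the real work to be a delicate case-by-case elimination of the intermediate-degree surfaces and degenerate configurations, showing that each either cannot produce a positive defect under $\mu\le 40$ or else degenerates into the plane or quadric already accounted for. Managing these boundary cases, rather than the general linkage mechanism, is where the adaptation of Kloosterman's method must do its real work.
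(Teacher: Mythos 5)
There is a genuine gap, in two places. First, your final step misreads the conclusion of Conjecture \ref{conj}: the ``plane'' in case (2) and the ``quadric surface'' in case (3) are two-dimensional subvarieties of the threefold $X\subset\IP^4$, not hypersurface components. Your proposed decomposition $X=S\cup X'$ with $S$ a hyperplane or quadric hypersurface is impossible for a nodal $X$, since $X$ would then be singular along the two-dimensional locus $S\cap X'$, contradicting the assumption that $X$ has only isolated singularities. The actual route is different: one shows that $\Sing(X)$ contains a complete intersection $\Sigma=\{f_1=f_2=f_3=f_4=0\}$ of multidegree $(1,1,d-1,d-1)$ or $(1,2,d-2,d-1)$ (Lemma \ref{contains_ci}), writes $F=\sum\varphi_if_i$, proves that each $\varphi_i$ vanishes on $\Sigma$ because the points of $\Sigma$ are nodes of $X$ (Proposition \ref{local_coordinates_and_nodes}), and concludes by a degree count that $F\in(f_1,f_2)$ with $\deg f_1+\deg f_2\leqslant 3$, so that the surface $\{f_1=f_2=0\}$ lies on $X$. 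Your B\'ezout/adjunction bound on nodes lying on a surface not contained in $X$ plays no role and would not deliver the required containment.

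Second, the step you defer as ``the real work'' --- eliminating the borderline configurations at $d=6$ --- is precisely the content of the theorem, and your sketch does not identify the mechanism that makes it go through. The paper does not classify intermediate-degree surfaces at all. Instead it passes to a general hyperplane section, completes $\overline{J}$ to an ideal $I$ with $S/I$ artinian Gorenstein of socle degree $2d-4=8$, and bounds $\#\Sing(X)\geqslant\sum_{k=0}^{8}h_I(k)$. For $d=6$ the Hilbert-function inequalities of Lemma \ref{auxiliary_cii} leave six possible $h$-vectors with sum at most $40$; these are killed by unimodality of Gorenstein $h$-vectors with $h_1\leqslant 3$ (Theorem \ref{stanley}, via Corollary \ref{136676631}) together with base-locus and socle-degree arguments (Propositions \ref{new} and \ref{base_locus}, Corollary \ref{complete_intersection_coincides_with_ideal}). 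Your Cayley--Bacharach setup with a complete intersection of four quintics of the Jacobian ideal is in the right spirit (and is essentially where Kloosterman starts), but without the artinian reduction, the explicit $h$-vector enumeration, and the unimodality input, the tight numerical cases at $d=6$ that you correctly flag as the obstacle are not actually resolved.
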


The paper is organized as follows. Section~\ref{2} recalls basic results on the Hilbert function of a graded ring, including Macaulay's and Gotzmann's Theorems. Section~\ref{3} treats artinian Gorenstein rings and describes results on unimodality, which concerns the behavior of the Hilbert function. In Section~\ref{4}, we~consider a homogeneous ideal giving artinian Gorenstein quotient ring and analyze the degrees of its generators. Here, the estimates on the values of the Hilbert function are given. In Section~\ref{5}, we follow an approach from \cite{kloosterman2022maximal} to prove Theorem \ref{main}. We give a notion of defect used to catch factoriality property and deeply connected to the Hilbert function of an ideal of singularities~$\Sing(X)$. Switching to algebraic methods, we split Kloosterman's proof into several parts and fix the part that fails for degree 6. Specifically, let $X$ be a three-dimensional nodal hypersurface of degree $d \geqslant 6$ in $\Proj (R)\simeq \mathbb{P}^4$ and let $J\subset R$ be a homogeneous ideal of $\Sing(X)$. Let~$\{\ell=0\}$ be a general hyperplane not passing through $\Sing(X)$. By taking an ideal $\overline{J}$ corresponding to this hyperplane section and possibly adding other homogeneous polynomials $f_j$ of degree $d_j$, where~$d-1 \leqslant d_j \leqslant 2d-4$, we can construct such an ideal $I\supset \overline{J}$ in $S=R/(\ell)$ that the corresponding quotient ring $S/I$ will be artinian Gorenstein of socle degree $2d-4$. In other words, we obtain some local ring having symmetric Hilbert function $h_I(k)$ for $k\leqslant 2d-4$ and $h_I(k)=0$ for $k\geqslant 2d -3$. Then a lower bound on the number~$\# \Sing(X)$ can be found in terms of the values $h_I(k)$. It turns out that $\Sing(X)$ contains a complete intersection of multidegree either $(1,1,d-1,d-1)$ or $(1,2,d-2,d-1)$ if $h_I(d-4)$ equals at most $2d-7$. Otherwise, Kloosterman showed that $X$ of degree $d\geqslant 7$ has more nodes than it was supposed in Conjecture \ref{conj}. The consequent estimation on $\# \Sing(X)$ through the values of $h_I$ and then through the Hilbert function of complete intersection ideal $I_{CI}$ of multidegree $(1,2,d-2,d-1)$ works perfectly in the case $d>7$. For $d=7$ this lower bound is exactly $2(d-2)(d-1)$. So, there is only one bad option for values $h_I$ (i.e. when all inequalities on $h_I(k)$ become equalities). As the geometric analysis carried out in \cite{kloosterman2022maximal} shows, this option does not realize. However, this analysis is not applicable for sextic hypersurfaces with more than one options giving $\# \Sing(X)$ less than $2(d-2)(d-1)+1$. According to the theorems on unimodality for such an ideal $I$ as we work with, if $h_I(k)\leqslant h_I(k+1)$ for some $k\leqslant d-4$, then $h_I(k+1)\leqslant h_I(k+2)$. Hence, we limit all possible bad sequences of $h_I(k)$ to 2 cases and exclude them too by analyzing the dimension of an intersection of hypersurfaces of fixed degree $t$ passing through the subscheme defined by the $t$-graded part of $I$.

\addsubsection*{Acknowledgements}

	The author would like to thank Constantin~Shramov for proposing the problem, providing valuable feedback on the draft, and engaging in helpful discussions. The author is also grateful to Manolis~C.~Tsakiris for his expertise on artinian Gorenstein rings and to Remke~Kloosterman for insightful comments.

\section{Preliminaries}\label{2}

Given the polynomial ring $S = \C[x_0,\dots,x_n]$ and a homogeneous ideal $I \subset S$, the Hilbert function $h_I(k) = \dim (S/I)_k$ measures the dimension of the $k$-th graded component of $S/I$. The Hilbert polynomial $p_I(t) \in \Q[t]$ agrees with the Hilbert function for large values of $k$.

To understand the Hilbert function, we use a notion of so-called \emph{Macaulay expansion}. Let $d \geqslant 1$ be an integer, and let $h_I(d)=C$ represent the value of the Hilbert function at $d$. The Macaulay $d$-expansion provides a unique representation of $C$ in base $d$
\begin{equation*}
	C=\sum\limits_{i=1}^{d}\binom{i+\eps_i}{i},
\end{equation*}
where the coefficients $\eps_i$ satisfy $\eps_d \geqslant \eps_{d-1} \geqslant \dots \geqslant \eps_{1} \geqslant -1$. This expansion can be obtained inductively as follows. The number $\eps_d$ is the largest integer such that $\binom{d+\eps_d}{d} \leqslant C$. The numbers $\eps_i$ for $i<d$ are the coefficients in the expansion of $C-\binom{d+\eps_d}{d}$ in base $d-1$.

Using the Macaulay expansion of $C$ for any positive integers $d$ we define the  functions $^{\langle d \rangle}:\Z\to \Z$ and $_{\ast d}:\Z\to \Z$:

\begin{equation*}
	C^{\langle d \rangle}=\sum\limits_{i=1}^{d}\binom{i+\eps_i+1}{i+1}, \hspace{10pt} C_{\ast d}=\sum\limits_{i=2}^d \binom{i+\eps_i -1}{i-1}.
\end{equation*}
Note that $C \mapsto C_{\ast d}$ and $C\mapsto C^{\langle d\rangle}$ are increasing functions in $c$.

\begin{Def}
A sequence $\{h_k \, \mid\,k\geqslant 0\}$ of nonnegative integers is called an \emph{$O$-sequence} if 
\begin{equation*}
	h_0=1,\hspace{5pt}\text{and}\hspace{5pt} h_{k+1}\leqslant h_k^{\langle k\rangle}\hspace{5pt}\text{for all }k.
\end{equation*}	
\end{Def}

\begin{thm}[Macaulay \cite{macaulay1927some}] \label{macaulay}
	The following are equivalent:
	\begin{enumerate}
		\item[\emph{(1)}] a sequence $\{h_k \,:\,k\geqslant 0\}$ of nonnegative integers is an $O$-sequence;
		\item[\emph{(2)}] there exists a polynomial ring $S$ and a homogeneous ideal $I\subset S$ such that $h_I(k)=h_k$ \\for all $k$.
	\end{enumerate}
\end{thm}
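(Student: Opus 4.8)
The plan is to prove the equivalence through the combinatorics of lexicographic ideals, reducing the general case to monomial ideals, where the operator $C \mapsto C^{\langle d\rangle}$ becomes transparent. There are two implications to establish: (2) $\Rightarrow$ (1), that every Hilbert function satisfies the growth bound $h_{k+1}\leqslant h_k^{\langle k\rangle}$ (together with $h_0=1$, which is immediate, since $(S/I)_0=\C$ for any proper homogeneous ideal $I$), and (1) $\Rightarrow$ (2), that every $O$-sequence is realized as $h_I$ for some $I$.

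For the realization direction (1) $\Rightarrow$ (2) I would give an explicit construction. Fix the polynomial ring $S=\C[x_0,\dots,x_n]$ with $n$ large, order the monomials of each degree $k$ lexicographically, and let $L_k$ be the span of the first $\binom{n+k}{k}-h_k$ monomials of degree $k$, the \emph{lex segment} of the prescribed codimension. Setting $L=\bigoplus_k L_k$, the Hilbert function of $S/L$ is $h_k$ by construction, so the only thing to verify is that $L$ is genuinely an ideal, i.e. $S_1\cdot L_k\subseteq L_{k+1}$ for every $k$. The key point is that the product of $S_1$ with a lex segment is again contained in a lex segment, and the size of this product is governed exactly by the $\langle k\rangle$-operator; the defining inequality $h_{k+1}\leqslant h_k^{\langle k\rangle}$ of an $O$-sequence then translates precisely into the containment we need.

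To make this rigorous and to handle the bound direction, the central lemma is a direct computation: for the lex segment $L_k$ of codimension $C$ in degree $k$ (so $\dim(S/L)_k = C$), the multiplication $S_1\cdot L_k$ again spans a lex segment, and the resulting quotient dimension in degree $k+1$ is exactly $C^{\langle k\rangle}$. This is a purely binomial calculation driven by Pascal's rule applied to the Macaulay expansion $C=\sum_{i=1}^{k}\binom{i+\eps_i}{i}$, under which passing from degree $k$ to degree $k+1$ sends each term $\binom{i+\eps_i}{i}$ to $\binom{i+\eps_i+1}{i+1}$. For an arbitrary homogeneous ideal $I$ I would first replace it by its initial ideal $\operatorname{in}(I)$ with respect to a term order, which is monomial and has the same Hilbert function, and then invoke the extremality of lex ideals: among all monomial ideals with a prescribed Hilbert function through degree $k$, the lex ideal has the largest possible quotient in degree $k+1$. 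Combining this extremality with the lex computation yields $h_{k+1}=h_I(k+1)\leqslant h_k^{\langle k\rangle}$.

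The main obstacle is precisely this extremality statement, namely that the lex segment grows as slowly as any other monomial ideal of the same size, or equivalently has the smallest shadow under multiplication by $S_1$. This is the genuinely hard combinatorial core, morally equivalent to a Kruskal--Katona-type shadow inequality, and I expect to prove it by a compression argument: iteratively replacing a monomial ideal by a ``more lex'' one in each degree, showing at each step that the ideal in the next degree does not grow, until the lex segment is reached. Checking that compression never increases this growth and that the process terminates at the lex segment is where the bulk of the careful bookkeeping lies; by comparison, the binomial identities and the construction used in the realization direction are routine once this lemma is in hand.
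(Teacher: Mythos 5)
The paper offers no proof of this statement: it is quoted as a classical theorem with a citation to Macaulay's 1927 paper, so there is no in-text argument to compare yours against. Your outline is the standard modern proof (as in \cite{bruns1998cohen}, Chapter 4, or Stanley's survey \cite{stanleyhilbert}): for (1) $\Rightarrow$ (2), realize the sequence by the lex ideal, whose degree-by-degree growth is computed exactly by the operator $C\mapsto C^{\langle k\rangle}$ via Pascal's rule on the Macaulay expansion; for (2) $\Rightarrow$ (1), pass to an initial ideal to reduce to the monomial case and invoke the shadow-minimality of lex segments. The structure is correct, and the steps you call routine (Gr\"obner degeneration preserving the Hilbert function, the binomial computation for lex segments, choosing $n$ large enough to accommodate $h_1$) really are routine. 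The honest caveat is the one you yourself flag: the extremality of lex segments \emph{is} the theorem, and your compression argument is only sketched --- one must verify that compressing with respect to each variable does not enlarge the degree-$(k{+}1)$ quotient, that the process terminates, and that the fixed points are exactly the lex-segment ideals. As written, your text is a correct and well-organized roadmap rather than a complete proof; since the paper uses the result purely as a black box, citing Macaulay or \cite{bruns1998cohen} is the appropriate resolution, and your sketch is consistent with the proofs found there.
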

\begin{cor}\label{macaulay_zero}
If $h_I(N)=0$ for some positive integer $N$ then $h_I(k)=0$ for all $k\geqslant N$.	
\end{cor}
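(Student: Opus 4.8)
The plan is to deduce this directly from the $O$-sequence characterisation in Theorem~\ref{macaulay}. Since $h_I$ is the Hilbert function of the homogeneous ideal $I$, the implication (2)$\Rightarrow$(1) of that theorem tells us that the sequence $\{h_I(k)\}$ is an $O$-sequence; in particular it satisfies the defining inequality $h_I(k+1)\leqslant h_I(k)^{\langle k\rangle}$ for every $k$. Thus everything reduces to understanding the operation $C\mapsto C^{\langle k\rangle}$ at the value $C=0$.

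First I would record that $0^{\langle k\rangle}=0$ for every positive integer $k$. For this, note that the Macaulay $k$-expansion of $C=0$ is obtained by taking $\eps_k=\eps_{k-1}=\dots=\eps_1=-1$, since then each summand $\binom{i+\eps_i}{i}=\binom{i-1}{i}$ vanishes (its lower index exceeds its upper index), so the sum is $0$, while the admissibility condition $\eps_k\geqslant\dots\geqslant\eps_1\geqslant-1$ holds with equality. Plugging these coefficients into the definition of $C^{\langle k\rangle}$ gives $0^{\langle k\rangle}=\sum_{i=1}^{k}\binom{i+\eps_i+1}{i+1}=\sum_{i=1}^{k}\binom{i}{i+1}=0$, again because each lower index $i+1$ exceeds its upper index $i$.

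With this in hand the corollary follows by induction on $k\geqslant N$. The base case $k=N$ is the hypothesis $h_I(N)=0$. For the inductive step, assuming $h_I(k)=0$ we combine the $O$-sequence inequality with the computation above to obtain $0\leqslant h_I(k+1)\leqslant h_I(k)^{\langle k\rangle}=0^{\langle k\rangle}=0$, whence $h_I(k+1)=0$. This propagates the vanishing to every degree $k\geqslant N$.

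I do not expect any genuine obstacle here: the only point requiring care is the bookkeeping in the Macaulay expansion of $0$, namely confirming that the all-$(-1)$ choice of coefficients is the admissible one and that the vanishing binomials are handled with the convention $\binom{m}{n}=0$ for $m<n$. As an alternative that sidesteps Theorem~\ref{macaulay} entirely, one could argue ring-theoretically: $h_I(N)=0$ means $I_N=S_N$, and since every monomial of degree $k\geqslant N$ factors as a degree-$(k-N)$ monomial times a degree-$N$ monomial, $S_k=S_{k-N}\cdot S_N=S_{k-N}\cdot I_N\subseteq I_k$, giving $h_I(k)=0$ directly.
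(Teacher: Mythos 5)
Your argument is correct and is essentially the paper's own proof: both apply Macaulay's inequality $h_I(k+1)\leqslant h_I(k)^{\langle k\rangle}$ together with the computation $0^{\langle k\rangle}=0$ and propagate the vanishing inductively; you merely spell out the all-$(-1)$ Macaulay expansion of $0$, which the paper leaves implicit. Your closing ring-theoretic alternative ($I_N=S_N$ forces $S_k=S_{k-N}\cdot I_N\subseteq I_k$) is also valid and arguably more elementary, but it is not needed.
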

\begin{proof}
	Indeed, by Theorem \ref{macaulay} one has $h_I(N+1)\leqslant 0^{\langle N\rangle}=0$. Hence, $h_I(N+1)=0$ and similarly $h_I(k)=0$ for all $k>N+1$.
\end{proof}

Having that $h_I(d)<h_I(d-1)^{\langle d-1\rangle }$ for the values of the Hilbert function, we can also obtain information on $h_I(d-1)$.

\begin{cor}\label{lower_bound}
	Let $I\subset S$ be a homogeneous ideal, $d\geqslant 2$ an integer and $C=h_I(d)$. Then
	\begin{equation*}
		h_I(d-1)\geqslant C_{\ast d}.
	\end{equation*}
	Moreover, if $\eps_1$ is nonnegative then $h_I(d-1)>h_{\ast d}$ holds.
\end{cor}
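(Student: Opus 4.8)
The plan is to combine Macaulay's inequality (Theorem~\ref{macaulay}) with two composition identities relating the operators $^{\langle d-1\rangle}$ and $_{\ast d}$, each proved by reindexing the Macaulay expansion. First I would apply Theorem~\ref{macaulay} with $k=d-1$ to get the upper bound $C=h_I(d)\leqslant h_I(d-1)^{\langle d-1\rangle}$; writing $B:=h_I(d-1)$, this reads $C\leqslant B^{\langle d-1\rangle}$. The whole corollary then amounts to inverting this single inequality, and $_{\ast d}$ is exactly the operator that does so.

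The central computation is the identity
\begin{equation*}
	\bigl(B^{\langle d-1\rangle}\bigr)_{\ast d}=B \qquad\text{for every integer }B\geqslant 0.
\end{equation*}
To prove it I would expand $B=\sum_{i=1}^{d-1}\binom{i+\delta_i}{i}$ in its Macaulay $(d-1)$-expansion, apply $^{\langle d-1\rangle}$, and shift the index by $j=i+1$ to obtain $B^{\langle d-1\rangle}=\sum_{j=2}^{d}\binom{j+\delta_{j-1}}{j}$. Appending the vanishing term $\binom{1+(-1)}{1}=\binom{0}{1}=0$ exhibits this as a genuine Macaulay $d$-expansion with $\eps_j=\delta_{j-1}$ for $j\geqslant 2$ and $\eps_1=-1$; the ordering $\delta_{d-1}\geqslant\dots\geqslant\delta_1\geqslant -1$ supplies the required chain $\eps_d\geqslant\dots\geqslant\eps_1=-1$. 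By uniqueness of the expansion I may then read off $_{\ast d}$ directly, and reindexing back recovers $B$. Granting this, the main inequality is immediate: since $_{\ast d}$ is increasing and $C\leqslant B^{\langle d-1\rangle}$, we get $C_{\ast d}\leqslant\bigl(B^{\langle d-1\rangle}\bigr)_{\ast d}=B=h_I(d-1)$.

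For the strict refinement I would establish the companion identity
\begin{equation*}
	\bigl(C_{\ast d}\bigr)^{\langle d-1\rangle}=C-\binom{1+\eps_1}{1}=C-(1+\eps_1),
\end{equation*}
by the same reindexing: the Macaulay $(d-1)$-expansion of $C_{\ast d}$ has coefficients $\delta_i=\eps_{i+1}$, and applying $^{\langle d-1\rangle}$ reproduces the tail $\sum_{j=2}^{d}\binom{j+\eps_j}{j}$ of the expansion of $C$, which differs from $C$ precisely by its bottom term. When $\eps_1\geqslant 0$ this yields $\bigl(C_{\ast d}\bigr)^{\langle d-1\rangle}=C-(1+\eps_1)<C\leqslant B^{\langle d-1\rangle}$. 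Suppose toward a contradiction that $C_{\ast d}\geqslant B$; monotonicity of $^{\langle d-1\rangle}$ would force $\bigl(C_{\ast d}\bigr)^{\langle d-1\rangle}\geqslant B^{\langle d-1\rangle}$, contradicting the strict inequality just obtained. Hence $C_{\ast d}<B=h_I(d-1)$, as claimed. (I read $h_{\ast d}$ in the statement as a typo for $C_{\ast d}$.)

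The routine content is the two reindexings. The one point demanding care---and essentially the only place the argument could break---is verifying that each shifted coefficient sequence is a legitimate Macaulay expansion: the monotonicity chain $\eps_d\geqslant\dots\geqslant -1$ must be preserved, and in the first identity the new bottom coefficient is forced to be exactly $-1$. It is only the uniqueness of the Macaulay expansion that licenses reading off the values of $_{\ast d}$ and $^{\langle d-1\rangle}$ from the reindexed sums, so this check is what makes the whole scheme go through. I would also dispose of the degenerate cases $B=0$ and $C=0$ separately, where the expansions are empty and both identities hold trivially.
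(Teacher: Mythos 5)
Your proof is correct and follows essentially the same route as the paper: apply Macaulay's bound $C\leqslant h_I(d-1)^{\langle d-1\rangle}$ and invert it by reindexing the Macaulay expansion, which is precisely the identity $\bigl(B^{\langle d-1\rangle}\bigr)_{\ast d}=B$ that the paper invokes implicitly with the phrase ``replacing $i+1$ with $i$''. You are in fact more complete than the paper, whose proof omits any argument for the ``Moreover'' clause: your companion identity $\bigl(C_{\ast d}\bigr)^{\langle d-1\rangle}=C-(1+\eps_1)$, together with monotonicity, gives a valid proof of the strict inequality (and you are right that $h_{\ast d}$ in the statement is a typo for $C_{\ast d}$).
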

\begin{proof}
We have 
$$h_I(d)=\sum\limits_{i=1}^d \binom{i+\eps_i}{i}$$
 and $h_I(d)\leqslant h_I(d-1)^{\langle d-1\rangle}$ from Theorem \ref{macaulay}. So, in terms of the Macaulay expansions the following inequality holds
\begin{equation*}
	\sum\limits_{i=1}^d \binom{i+\eps_i}{i} \leqslant \sum\limits_{i=1}^{d-1}\binom{i+\eps_i'+1}{i+1}.
\end{equation*}
After replacing $i+1$ with $i$, the required condition is obtained
\begin{equation*}
	h_I(d)_{\ast d}=\sum\limits_{i=2}^d \binom{i+\eps_i-1}{i-1} \leqslant \sum\limits_{i=1}^{d-1}\binom{i+\eps_i'}{i}=h_I(d-1).
\end{equation*}	
\end{proof}

\begin{remark}\label{macaulay_examples}
For small $h$ we have the following Macaulay expansions in base $d$.
\begin{itemize}
	\item For $h\leqslant d$ we have $\eps_d=\dots =\eps_{d-h+1}=0$ and $\eps_{d-h}=\dots=\eps_1=-1$. Hence $h^{\langle d\rangle}=h$.
	\item For $d+1\leqslant h \leqslant 2d$ we have $\eps_d=1$, $\eps_{d-1}=\dots =\eps_{d-a}=0$, $\eps_{d-a-1}=\dots = \eps_1=-1$, where $a=h-d-1$. Hence $h^{\langle d \rangle}=h+1$.
	\item For $h=2d+1$ we have $\eps_d=\eps_{d-1}=1$ and all other $\eps_i$ equal $-1$. Hence $h^{\langle d\rangle}=2d+3=h+2$.
\end{itemize}	
\end{remark}

Applying Corollary \ref{lower_bound} repeatedly yields
\begin{cor}\label{corollary_example}
Let $I\subset S$ be a homogeneous ideal, $d\geqslant 2$ an integer and $h=h_I(d)$. For $0\leqslant k \leqslant d$ we have 
\begin{equation*}
	h_I(k)\geqslant 
	\begin{cases}
	\min(h,k+1) \hspace{65pt}\text{if } h \leqslant d;\\
	\min(k+(h-d),2k+1) \hspace{12pt}\text{if } d+1 \leqslant h \leqslant 2d;\\
	2k+1	\hspace{95pt}\text{if } h=2d+1.
	\end{cases}
\end{equation*}	
\end{cor}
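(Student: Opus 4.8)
The plan is to prove Corollary \ref{corollary_example} by applying Corollary \ref{lower_bound} repeatedly, working downward from $h_I(d)=h$ to $h_I(k)$ for each $k$ in the range $0 \le k \le d$. The key device is that Corollary \ref{lower_bound} tells us $h_I(j-1) \ge h_I(j)_{\ast j}$, so the whole task reduces to understanding how the operation $C \mapsto C_{\ast j}$ acts on the Macaulay expansions and, crucially, how the Macaulay expansion of the smaller value $h_I(j-1)$ looks so that the next application is clean. Since $C \mapsto C_{\ast j}$ is increasing, it suffices to track a lower bound on $h_I(j)$ through the iteration rather than the exact value.

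\medskip

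First I would treat the three regimes of the initial value $h$ separately, using the explicit Macaulay expansions already recorded in Remark \ref{macaulay_examples} to seed the recursion. In the regime $h \le d$ the expansion has $\eps_d = \dots = \eps_{d-h+1} = 0$ with the rest equal to $-1$; a direct computation of $h_{\ast d}$ from the definition $C_{\ast d} = \sum_{i=2}^d \binom{i+\eps_i-1}{i-1}$ gives $\min(h,d)$ shifted appropriately, and one checks that as we descend the lower bound behaves like $\min(h, k+1)$, the $k+1$ appearing once $k$ drops below $h-1$ because a value of $h_I(k)$ on a ring of $k$ variables cannot exceed the full-dimensional count but the staircase forces at least $k+1$. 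In the regime $d+1 \le h \le 2d$ the expansion has $\eps_d = 1$ and a block of zeros of length $a = h-d$, so applying $_{\ast d}$ lowers the top binomial and produces the claimed $\min(k+(h-d), 2k+1)$; the $2k+1$ cap is exactly the Hilbert-function value of a complete intersection of two forms (a generic such value grows by two per step), while $k + (h-d)$ is the slower linear growth that dominates when $h$ is only slightly above $d$. The case $h = 2d+1$ with $\eps_d = \eps_{d-1} = 1$ degenerates to the pure $2k+1$ bound.

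\medskip

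The cleanest way to organize the induction is to prove a single descent step: if $h_I(k+1) \ge f(k+1)$ where $f$ is the piecewise-linear right-hand side, then $h_I(k) \ge h_I(k+1)_{\ast (k+1)} \ge f(k+1)_{\ast (k+1)} \ge f(k)$, the middle inequality using monotonicity of $_{\ast}$ and the last being a finite check on the expansions of the explicit values $k+1$, $k+(h-d)$, and $2k+1$. For the linear pieces $C = 2k+1$ and $C = k + c$ the Macaulay base-$(k+1)$ expansions are short enough that $C_{\ast(k+1)}$ can be read off directly, so each descent inequality reduces to verifying $(2k+3)_{\ast(k+2)} \ge 2k+1$ and $(k+1+c)_{\ast(k+2)} \ge k+c$, both of which are elementary binomial identities. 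I would carry these out once and for all, noting that the ``moreover'' clause of Corollary \ref{lower_bound} (strict inequality when $\eps_1 \ge 0$) guarantees the $2k+1$ bound survives the descent without leaking.

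\medskip

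The main obstacle I anticipate is not any single inequality but keeping the bookkeeping honest across the boundaries where the piecewise-linear function switches branches: the $\min$ in each case means the active expression changes as $k$ decreases, and I must ensure the descent step respects the transition, i.e. that $f(k+1)_{\ast(k+1)}$ still dominates the branch of $f(k)$ that is active at the smaller index. This is where the monotonicity of $C \mapsto C_{\ast d}$ pays off: since descending can only shrink the value and the $\min$ only shrinks the target, a careful case split on which branch is active at $k+1$ versus $k$—handled by comparing $k+1$ to $h$ in the first regime and $k + (h-d)$ to $2k+1$ in the second—closes the argument without needing to recompute full expansions at every step.
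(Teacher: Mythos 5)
Your proposal is correct and matches the paper's approach exactly: the paper's entire ``proof'' is the phrase ``Applying Corollary \ref{lower_bound} repeatedly yields,'' and your plan fills in precisely the intended descent, using the expansions of Remark \ref{macaulay_examples}, monotonicity of $C\mapsto C_{\ast d}$, and the strictness clause of Corollary \ref{lower_bound}. One small correction to your bookkeeping: the pure $2k+1$ descent needs no strictness, since $(2k+3)_{\ast(k+1)}=2k+1$ exactly (and there $\eps_1=-1$ anyway); the ``moreover'' clause is instead what rescues the branch-transition steps, e.g.\ when $h_I(k+1)$ is bounded below by exactly $k+1$ in the first regime (where $(k+1)_{\ast(k+1)}=k$ but all $\eps_i=0$ forces the strict inequality).
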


The following result will be used to detect the Hilbert polynomial of the ideal generated by $I_d$.

\begin{thm}[Gotzmann \cite{gotzmann1978bedingung}] \label{gotzmann}
	Let $V \subset S_d$ be a subspace in the $d$-graded part of $S$ and $J\subset S$ be the ideal generated by $V$. Set $C=h_J(t)$. If $h_J(d+1)=C^{\langle d \rangle}$ then we have $h_J(k+1)=h_J(k)^{\langle k\rangle}$ for all $k \geqslant d$. In particular, the Hilbert polynomial $p_J(t)$ of $J$ is given by
	\begin{equation*}
		p_J(t)=\sum\limits_{i=1}^{d}\binom{t+ \eps_i}{t},
	\end{equation*} 
	and the dimension of $V(J)$ equals $\eps_d$.
\end{thm}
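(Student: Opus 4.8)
The statement is Gotzmann's persistence theorem, and I would organise the proof around the single idea that it upgrades an \emph{inequality} to an \emph{equality}. By Theorem~\ref{macaulay} the Hilbert function $h_J$ is an $O$-sequence, so $h_J(k+1)\leqslant h_J(k)^{\langle k\rangle}$ holds for every $k$ automatically; the whole content of the theorem is that if this bound is attained at the single step $k=d$, then it is attained at every step $k\geqslant d$. Throughout I would exploit the hypothesis that $J$ is generated in the one degree $d$, so that $J_{k+1}=S_1\cdot J_k$ for all $k\geqslant d$ and hence $h_J(k+1)=\dim S_{k+1}-\dim(S_1\cdot J_k)$; in this language maximal growth of $h_J$ is the statement that the multiplication map makes $S_1\cdot J_k$ as \emph{small} as Macaulay's inequality permits.

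First I would translate the hypothesis into the language of monomial ideals. By Theorem~\ref{macaulay} there is a lexicographic ideal $L$ with $h_{S/L}=h_J$, and for a lex (more generally strongly stable) ideal the defect $h_{S/L}(k)^{\langle k\rangle}-h_{S/L}(k+1)$ equals the number of minimal generators of $L$ in degree $k+1$. Thus the assumption $h_J(d+1)=h_J(d)^{\langle d\rangle}$ says exactly that $L$ has no minimal generator in degree $d+1$. This reformulation is clarifying but, on its own, \emph{insufficient}: a lex ideal may have a generator gap (for instance $(x_0,x_1^3)\subset\C[x_0,x_1]$ has generators only in degrees $1$ and $3$), so ``no generator in degree $d+1$'' does not by itself force ``no generator in degree $d+2$''. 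Overcoming precisely this point is the crux, and it is where the hypothesis that $J$ is generated in a single degree must be used essentially, and not merely through the Hilbert function.

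To propagate the equality I would run the standard argument by induction on the number of variables, using a generic linear form $\ell$ together with the two extremal bounds available for it. Setting $\overline S=S/(\ell)$ and $\overline J=(J+(\ell))/(\ell)$, the short exact sequence $0\to (S/(J:\ell))(-1)\xrightarrow{\cdot\ell} S/J\to \overline S/\overline J\to 0$ links the three Hilbert functions, and for generic $\ell$ the form is eventually a nonzerodivisor, so that $h_{\overline S/\overline J}(k)=h_J(k)-h_J(k-1)$ in the relevant range. Alongside Macaulay's bound upstairs I would invoke Green's hyperplane restriction theorem, which bounds $h_{\overline S/\overline J}(k)$ by the Green analogue of the Macaulay operation applied to $h_J(k)$ (the operation lowering the top entries of the Macaulay expansion, distinct from the operation $_{\ast d}$ used in Corollary~\ref{lower_bound}). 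The key step is then a \emph{double-equality} argument: attaining Macaulay's bound at degree $d$ forces Green's bound to be attained as well, which exhibits $\overline J$ as an ideal in one fewer variable that again grows maximally at degree $d$; the inductive hypothesis gives persistence downstairs, and one lifts it back through the exact sequence to persistence for $J$. I expect this simultaneous rigidity of the Macaulay and Green inequalities to be the main obstacle, since each bound is individually routine, but their joint sharpness is exactly what encodes Gotzmann's regularity phenomenon.

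Finally, once $h_J(k+1)=h_J(k)^{\langle k\rangle}$ is known for all $k\geqslant d$, I would iterate the shift on Macaulay expansions (the upward counterpart of the computation in Corollary~\ref{lower_bound}): writing $C=h_J(d)=\sum_{i=1}^d\binom{i+\eps_i}{i}$, repeated application of $^{\langle\cdot\rangle}$ yields
\[
	h_J(d+m)=\sum_{i=1}^d\binom{i+\eps_i+m}{i+m}\qquad\text{for every }m\geqslant 0.
\]
This expression is polynomial in $m$, so it already equals the Hilbert polynomial $p_J$ in every degree $\geqslant d$, and up to the usual reindexing of the Gotzmann representation it is the form displayed in the statement. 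Each summand has degree $\eps_i$ and the top one has degree $\eps_d$; since the degree of the Hilbert polynomial is the dimension of the associated projective scheme, this gives $\dim V(J)=\eps_d$ and completes the proof.
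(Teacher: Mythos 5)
The paper offers no proof of this statement: Theorem~\ref{gotzmann} is quoted from Gotzmann's article and used as a black box, so there is no in-paper argument to measure your attempt against. Judged on its own terms, your outline correctly identifies the standard modern route to the persistence theorem --- the passage to a lexsegment ideal with the same Hilbert function, the observation that maximal growth at one step means the lex ideal acquires no new generators in that degree, the counterexample showing that this alone does not propagate, and the induction on the number of variables via a generic linear form combined with Green's hyperplane restriction theorem. Your endgame is also sound: once $h_J(k+1)=h_J(k)^{\langle k\rangle}$ persists, iterating the operation gives $h_J(d+m)=\sum_{i=1}^{d}\binom{i+\eps_i+m}{i+m}$, which is a polynomial in $m$ whose top-degree term has degree $\eps_d$, yielding both the displayed Hilbert polynomial (after reindexing) and $\dim V(J)=\eps_d$.

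The gap sits exactly where you flag ``the main obstacle'': the simultaneous rigidity of Macaulay's and Green's inequalities. You assert that equality in Macaulay's bound at degree $d$ forces equality in Green's bound for a generic hyperplane section, that the restricted ideal $\overline{J}$ again satisfies the hypotheses of the theorem in one fewer variable, and that persistence downstairs lifts back through the exact sequence --- but none of these three claims is argued. The first needs the numerical identity $C^{\langle d\rangle}=C+\bigl(C^{\langle d\rangle}\bigr)_{\langle d+1\rangle}$ (where $_{\langle k\rangle}$ is Green's operation, not the paper's $_{\ast k}$) combined with the exact-sequence inequality $h_{\overline{S}/\overline{J}}(d+1)\geqslant h_J(d+1)-h_J(d)$; the second needs $(J:\ell)_d=J_d$, which is what equality in the exact sequence is supposed to deliver, so that $\overline{J}$ is still generated in degree $d$; the third needs the sequence run upward in degree again. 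Since this chain of forced equalities is the entire content of Gotzmann's theorem --- everything before and after it is routine --- what you have written is a correct plan of attack rather than a proof. For the purposes of this paper that is harmless, because the result is legitimately cited rather than reproved; but a self-contained argument must carry out the rigidity step explicitly.
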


\section{Structure of artinian Gorenstein rings}\label{3}

Here, we provide the theory for computations in artinian rings (see \cite[Chapter 4]{vasconcelos2004computational} for details).

Let $S$ be the polynomial ring $\C[x_0,\dots,x_n]$ and let $I$ be a homogeneous ideal of $S$ such that $A=S/I$ is an artinian ring. Then there is a decomposition of $A$ into a product of local rings
\begin{equation*}
	A=A_1 \times \dots \times A_r,
\end{equation*}
and the Jacobson radical $\mathfrak{J}(A)$ of $A$ can be written as
\begin{equation*}
	\mathfrak{J}(A)=\mathfrak{m}_1\times \dots \times \mathfrak{m}_r,
\end{equation*}
where $\mathfrak{m}_i$ is the maximal ideal of $A_i$. In the lattice of ideals of $A$, sitting at the opposite end of $\mathfrak{J}(A)$ there lies another distinguished ideal, the socle of $A$:

\begin{Def}
The \emph{socle} of the artinian ring $A$ is the annihilator of its Jacobson radical $\mathfrak{J}(A)$. It will be denoted by $\Socle(A)$.	
\end{Def}

The socle of $A$ can also be presented as a product
\begin{equation*}
	\Socle(A)=(0:\mathfrak{m}_1)\times \dots \times (0:\mathfrak{m}_r).
\end{equation*} 
\begin{Def}[{\cite[Proposition 21.5]{eisenbud2013commutative}}]
An artinian ring $A$ over $\C$ is called \emph{Gorenstein} if one of the following equivalent conditions holds:
\begin{enumerate}
\item	each $A$-module $(0:\mathfrak{m}_i)$ is generated by one element $s_i$;
\item each $A_i$-module $(0:\mathfrak{m}_i)$ is simple.
\end{enumerate}
\end{Def}

\begin{remark}\label{1-dimensional}
 Simpleness of $M=(0:\mathfrak{m}_i)$ implies $M\simeq A_i/\mathfrak{m}_i$ (see \cite[Theorem 2.13]{eisenbud2013commutative}). Hence, each annihilator of $\mathfrak{m}_i$ is a 1-dimensional vector space over $\C$.	
\end{remark}

Artinian Gorenstein rings have a wonderful property which we will refer to as Gorenstein duality.

\begin{thm}[{\cite[Theorem 9.1]{huneke1999hyman}}]\label{gorenstein-duality}
Let $A$ be a $0$-dimensional Gorenstein ring $S/I$. Suppose that
$$A=A_0\oplus \dots \oplus A_e,$$
where $A_i$, $i=0,\dots, e$, is graded component, $A_0=\C$, and $A_e\neq 0$. Then
the pairing 
$$A_k \times A_{e-k}\to A_e\simeq \mathbb{C}$$
is perfect and identifies $A_k$ with the dual of $A_{e-k}$. In particular,
 the ideal $I$ has symmetric Hilbert function
\begin{equation*}
h_I(k)=h_I(e-k)\hspace{15pt}\text{for all }k=0,\dots,e.
\end{equation*}	
\end{thm}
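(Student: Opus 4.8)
The plan is to exploit the grading to show that $A$ is local with socle equal to its top graded piece $A_e$, and then to use the one-dimensionality of that socle to force the multiplication pairing to be nondegenerate on both sides.

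First I would observe that, since $A_0=\C$ and $A_i=0$ for $i>e$, the homogeneous maximal ideal $\mathfrak{m}=\bigoplus_{i\geqslant 1}A_i$ satisfies $\mathfrak{m}^{e+1}=0$; hence every element outside $\mathfrak{m}$ is a unit and $A$ is local with maximal ideal $\mathfrak{m}$. In particular the decomposition $A=A_1\times\dots\times A_r$ has a single factor, and the Gorenstein hypothesis together with Remark \ref{1-dimensional} gives that $\Socle(A)=(0:\mathfrak{m})$ is one-dimensional over $\C$. Next I would note that $\mathfrak{m}\cdot A_e\subseteq A_{e+1}=0$, so $A_e\subseteq\Socle(A)$; since $A_e\neq 0$ and the socle is one-dimensional, this forces $\dim_{\C}A_e=1$ and $\Socle(A)=A_e$. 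Thus $A_e\simeq\C$ and the pairing $A_k\times A_{e-k}\to A_e$ is the well-defined multiplication map.

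The heart of the argument is the claim that the socle is an \emph{essential} ideal: every nonzero ideal of $A$ meets $\Socle(A)$ nontrivially. I would prove this by taking a nonzero $x$ in a given ideal and letting $n$ be the largest integer with $\mathfrak{m}^n x\neq 0$ (such $n$ exists because $\mathfrak{m}$ is nilpotent while $\mathfrak{m}^0 x=x\neq 0$); then any nonzero element of $\mathfrak{m}^n x$ is annihilated by $\mathfrak{m}$, hence lies in both the socle and the given ideal. Applying this to the principal ideal $(a)$ generated by an arbitrary nonzero homogeneous $a\in A_k$ (with $0\leqslant k\leqslant e$) produces a nonzero element $y\in (a)\cap A_e$. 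Writing $y=ca$ and passing to homogeneous components, only the degree-$(e-k)$ component $b$ of $c$ survives in degree $e$, so $b\in A_{e-k}$ satisfies $ab=y\neq 0$. This shows the pairing is nondegenerate in the first variable, for every $k$.

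Finally, since multiplication is commutative the pairing is symmetric, so nondegeneracy in the first variable for all degrees—applied to the pair $(k,e-k)$ and to $(e-k,k)$—yields nondegeneracy in both variables. For finite-dimensional $\C$-vector spaces this gives injections $A_k\hookrightarrow (A_{e-k})^\ast$ and $A_{e-k}\hookrightarrow (A_k)^\ast$, forcing $\dim_{\C}A_k=\dim_{\C}A_{e-k}$ and making both maps isomorphisms; hence the pairing is perfect and $h_I(k)=h_I(e-k)$. I expect the main obstacle to be the essentiality step, and within it the bookkeeping needed to extract a genuinely homogeneous multiplier $b\in A_{e-k}$ from the relation $y=ca$, since this is precisely what converts the abstract one-dimensional-socle statement into the graded perfect pairing asserted by the theorem.
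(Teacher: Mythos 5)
Your proof is correct, but note that the paper itself offers no proof of this statement at all: it is quoted verbatim as a known result with a citation to Huneke's survey (Theorem 9.1 of \cite{huneke1999hyman}), so there is no internal argument to compare against. What you have written is the standard self-contained derivation: grading plus $A_0=\C$ forces $A$ to be local with nilpotent maximal ideal, the Gorenstein hypothesis (via Remark \ref{1-dimensional}) makes the socle one-dimensional, $A_e\subseteq\Socle(A)$ then pins the socle down as $A_e$, and the essentiality of the socle in an artinian local ring converts one-dimensionality into nondegeneracy of the multiplication pairing. All the steps check out, including the two points that most often get glossed over: the element $y\in(a)\cap\Socle(A)$ produced by essentiality is automatically homogeneous of degree $e$ because $\Socle(A)=A_e$ sits in a single degree, and the homogeneous multiplier $b=c_{e-k}$ is correctly extracted by comparing graded components of $y=ca$. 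The only stylistic remark is that your symmetry step is doing slightly more work than needed -- once you know nondegeneracy in the first variable for every $k$ simultaneously, the injections $A_k\hookrightarrow(A_{e-k})^\ast$ for $k$ and for $e-k$ already give the dimension count, and commutativity is only invoked to identify the two pairings -- but that is exactly how you phrase it, so the argument is complete. In short: your proposal supplies a valid proof where the paper supplies only a reference, and it buys the reader a self-contained treatment at the cost of about a page.
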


\begin{Def}
	Let $A$ be a local artinian Gorenstein ring and $s$ be a polynomial that generates its socle. Then the degree of the polynomial $s$ is called the \emph{socle degree}.
\end{Def}

There are special cases when the socle and the socle degree can be found explicitly. We describe below rings arising from complete intersections of $n+1$ hypersurfaces in $\IP^n$. A homogeneous ideal~$I\subset S$ is called a \emph{complete intersection ideal} if it is generated by a regular sequence $(f_0,\dots, f_n)$ of homogeneous polynomials $f_i$. Let $d_i=\deg f_i$. Then the multidegree of complete intersection ideal is the sequence $(d_0,\dots, d_n)$. The quotient ring~$A=S/I$ is called a \emph{complete intersection ring}.

We use a result of Tate \cite[Theorem A.3(4)]{mazur1970local} (see also \cite{storch1975spurfunktionen}) that gives a concrete description~of the socle of $A$.

\begin{thm}[Tate]\label{tate}
Let $I=(f_0,\dots, f_n) \subset S$ be a homogeneous ideal such that $A=S/I$ is both local and a complete intersection ring. Then $\Socle(A)$ is generated by the  determinant $\det (\Jac)$ of the Jacobian matrix of polynomials $f_0,\dots, f_n$.
\end{thm}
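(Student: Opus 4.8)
The plan is to separate the statement into two claims: first, that the class of $\det(\Jac)$ in $A$ lies in $\Socle(A)$; and second, that this class is nonzero, so that it spans the socle. For the containment I would use Euler's identity together with Cramer's rule. Since each $f_i$ is homogeneous of degree $d_i$, Euler's relation gives $\sum_{j=0}^{n} x_j\,\partial f_i/\partial x_j = d_i f_i$. Writing $\mathbf{x}=(x_0,\dots,x_n)^{T}$, $\mathbf{f}=(f_0,\dots,f_n)^{T}$ and $D=\mathrm{diag}(d_0,\dots,d_n)$, this reads $\Jac\cdot\mathbf{x}=D\,\mathbf{f}$. Multiplying on the left by the adjugate matrix and using $\mathrm{adj}(\Jac)\cdot\Jac=\det(\Jac)\,\mathrm{Id}$ yields
\[
\det(\Jac)\,x_j=\sum_{i=0}^{n}\big(\mathrm{adj}(\Jac)\big)_{ji}\,d_i\,f_i\in I \qquad (j=0,\dots,n).
\]
Hence $x_j\cdot\det(\Jac)\in I$ for every $j$, i.e. the maximal ideal $\mathfrak{m}=(x_0,\dots,x_n)$ annihilates the class of $\det(\Jac)$ in $A$; by definition this class therefore lies in $\Socle(A)=(0:\mathfrak{m})$. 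This step is a purely formal computation and presents no real difficulty.

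Next I would record that $\Socle(A)$ is one-dimensional over $\C$. Indeed, a complete intersection ring is Gorenstein, and since $A$ is in addition local, Remark~\ref{1-dimensional} (the case $r=1$) shows that $(0:\mathfrak{m})$ is a one-dimensional $\C$-vector space; equivalently, by the Gorenstein duality of Theorem~\ref{gorenstein-duality} the socle is the top graded piece $A_e\cong\C$, sitting in degree $e=\sum_{i}(d_i-1)=\deg\det(\Jac)$, which matches the degree of the Jacobian determinant. Consequently it suffices to prove that $\det(\Jac)\notin I$: a nonzero element of the one-dimensional space $\Socle(A)\cong A/\mathfrak{m}$ spans it over $\C$ and hence generates it as an $A$-module.

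The nonvanishing of $\det(\Jac)$ in $A$ is the main obstacle, since it is exactly where the honest content of Tate's theorem resides. I would establish it through the canonical residue (trace) pairing attached to a complete intersection \cite{storch1975spurfunktionen}: this is a nondegenerate symmetric $\C$-bilinear form on $A$, and the Euler--Jacobi relation evaluates the associated residue functional on the Jacobian as $\prod_{i} d_i\neq 0$, forcing $\det(\Jac)\neq 0$ in $A$. Making this residue computation precise is the technical heart of the argument; a transparent sanity check is the Fermat case $f_i=x_i^{d_i}$, where $\Jac$ is diagonal and $\det(\Jac)=\big(\prod_i d_i\big)\prod_i x_i^{d_i-1}$ is the unique top monomial, visibly nonzero in $A=\C[x_0,\dots,x_n]/(x_0^{d_0},\dots,x_n^{d_n})$ and spanning its socle. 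Combining the three steps, $\det(\Jac)$ is a nonzero element of the one-dimensional socle, hence generates $\Socle(A)$, which is the assertion of the theorem.
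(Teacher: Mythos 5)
Your first two steps are correct and the containment argument is essentially the paper's own, just packaged differently: the paper multiplies the $k$-th column of $\Jac$ by $x_k$ and uses Euler's identity plus column operations to see that the determinant vanishes modulo $I$, whereas you multiply $\Jac\cdot\mathbf{x}=D\mathbf{f}$ on the left by the adjugate; these are the same Euler-identity computation. Your reduction of the problem to the single claim $\det(\Jac)\notin I$, via the one-dimensionality of the socle of a local artinian complete intersection (hence Gorenstein) ring, is also correct and is what the paper uses implicitly.

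The gap in your proposal is the step you yourself flag: the nonvanishing of $\det(\Jac)$ in $A$. You name the right tool --- the Scheja--Storch trace form and the evaluation of the residue functional on the Jacobian as $\prod_i d_i$ --- but you do not carry out that computation, so as written this step is an appeal to \cite{storch1975spurfunktionen} rather than a proof; the Fermat example is a consistency check, not an argument for the general case (one would still need, say, a semicontinuity or deformation argument to transfer nonvanishing from the Fermat point to an arbitrary complete intersection). You should know, however, that the paper's own proof does not close this gap either: its final paragraph writes $\det(\Jac)=h\varphi$ with $\varphi$ a socle generator, observes that $h\in\mathfrak{m}$ would force $\det(\Jac)=0$, and declares a contradiction --- but no contradiction arises unless $\det(\Jac)\neq 0$ in $A$ is already known, which is exactly the point at issue and is nowhere established there; the paper is in effect deferring to \cite{mazur1970local} and \cite{storch1975spurfunktionen} for the same fact you deferred. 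So your outline is structurally sound, correctly locates the genuine content of Tate's theorem, and is in fact more candid than the paper about where that content lies; to be a complete proof it would need the residue (or an equivalent nondegeneracy) computation spelled out.
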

\begin{proof}
	First, we show that $\det (\Jac)$ annihilates the maximal ideal $\mathfrak{m}=(x_0,\dots,x_n)$. For each $f_i$ we~have 
	\begin{equation*}
		\sum\limits_{j=0}^n \frac{\partial f_i}{\partial x_j} \cdot x_j=(\text{deg }f_i)\cdot f_i.
	\end{equation*}
	From this it follows that $\det (\Jac)$ annihilates $x_k$ for all $k$:
	\begin{multline*}		
	\text{det}\bigg(\frac{\partial f_i}{\partial x_j}\bigg)\cdot x_k =\text{det}\bigg(\frac{\partial f_i}{\partial x_1} \hspace{10pt} \dots \hspace{10pt} \frac{\partial f_i}{\partial x_k}\cdot x_k \hspace{10pt}\dots\hspace{10pt}\frac{\partial f_i}{\partial x_n}\bigg)=\\
	=\text{det}\bigg(\frac{\partial f_i}{\partial x_1} \hspace{10pt} \dots \hspace{10pt} (\text{deg }f_i)\cdot f_i -\sum\limits_{j\neq k}\frac{\partial f_i}{\partial x_j}\cdot x_j \hspace{10pt}\dots\hspace{10pt}\frac{\partial f_i}{\partial x_n}\bigg)=0.
	\end{multline*}
	
	It remains to show that 	$\det (\Jac)$ is the generator of the socle. If $\det (\Jac)=h\cdot \varphi$ where $\varphi$ is the generator and $h$ is not a unit, then $h$ lies in the maximal ideal $\mathfrak{m}$. Therefore, $\det (\Jac)$ should be equal to 0. Contradiction. 
\end{proof}
\begin{cor}\label{socle_degree}
	Let $I=(f_0,\dots,f_n)\subset S$ be a complete intersection ideal, and let $d_i=\deg f_i$. Then the socle degree of $S/I$ equals $$\sum\limits_{i=0}^n (d_i -1).$$
\end{cor}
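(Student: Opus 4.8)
The plan is to deduce the statement directly from Theorem~\ref{tate}, reducing everything to a homogeneity computation for the Jacobian determinant. By definition the socle degree of the artinian Gorenstein ring $A=S/I$ is the degree of a generator of $\Socle(A)$, and Theorem~\ref{tate} identifies such a generator as $\det(\Jac)$, the determinant of the Jacobian matrix $\big(\partial f_i/\partial x_j\big)_{0\leqslant i,j\leqslant n}$. Thus it suffices to show that $\det(\Jac)$ is homogeneous of degree $\sum_{i=0}^n(d_i-1)$.

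First I would record that Theorem~\ref{tate} indeed applies: since $(f_0,\dots,f_n)$ is a regular sequence of length $n+1$ in the $(n+1)$-dimensional ring $S$, the quotient $A$ is zero-dimensional, and a complete intersection ring is Gorenstein. For the degree count, note that each $f_i$ is homogeneous of degree $d_i$, so every entry $\partial f_i/\partial x_j$ of the $i$-th row of the Jacobian matrix is homogeneous of degree $d_i-1$ (allowing the zero polynomial when a partial derivative vanishes). Expanding the determinant by the Leibniz formula,
\begin{equation*}
\det(\Jac)=\sum_{\sigma\in \mathfrak{S}_{n+1}}\operatorname{sgn}(\sigma)\prod_{i=0}^n \frac{\partial f_i}{\partial x_{\sigma(i)}},
\end{equation*}
each summand is a product of exactly one entry from each row, hence homogeneous of degree $\sum_{i=0}^n(d_i-1)$. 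Since every term of the expansion carries the same degree, $\det(\Jac)$ is homogeneous of this degree, which immediately yields the claimed value of the socle degree.

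The computation itself is routine; the only point that requires care is that $\det(\Jac)$ is a genuine nonzero generator of the socle rather than zero in $A$, but this is exactly the content of Theorem~\ref{tate}, whose proof already shows that $\det(\Jac)$ annihilates $\mathfrak{m}$ and is not a proper multiple of the socle generator. In the graded setting one may alternatively observe that the socle of a graded artinian Gorenstein ring is one-dimensional and concentrated in the top nonvanishing degree $e$, so that the socle degree coincides with this $e$; matching this with the homogeneous degree of $\det(\Jac)$ computed above gives $e=\sum_{i=0}^n(d_i-1)$. I do not anticipate a genuine obstacle, as all the substantive work is carried by Tate's theorem and the remaining argument is a bookkeeping of degrees.
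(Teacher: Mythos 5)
Your proposal is correct and follows exactly the route the paper intends: the corollary is stated as an immediate consequence of Theorem~\ref{tate}, with the socle degree read off as the degree of $\det(\Jac)$, each row of which consists of entries of degree $d_i-1$. The Leibniz-formula bookkeeping and the remark that Tate's theorem guarantees $\det(\Jac)$ is a genuine (nonzero) generator are precisely the implicit content of the paper's argument.
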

\begin{cor} \label{complete_intersection_coincides_with_ideal}
	Let $I\subset S$ be a proper homogeneous ideal and let $I\supset I_{CI}=(f_0,\dots,f_n)$ be a complete intersection ideal. Suppose both rings $S/I$ and $S/I_{CI}$ are artinian Gorenstein and of the same socle degree. Then $I=I_{CI}$.
\end{cor}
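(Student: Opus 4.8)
The plan is to regard the inclusion $I_{CI}\subset I$ as a surjection of graded rings $\pi\colon B\to A$, where $B=S/I_{CI}$ and $A=S/I$, with kernel $K=I/I_{CI}$, a homogeneous ideal of $B$. Proving $I=I_{CI}$ then amounts to showing $K=0$. Write $e$ for the common socle degree. First I would record the two facts that make the top degree special: a graded artinian Gorenstein ring of socle degree $e$ has one-dimensional top component and vanishes in higher degrees. Indeed $\mathfrak{m}\cdot A_e\subset A_{e+1}=0$ forces $A_e\subset\Socle(A)$, and $\Socle(A)$ is one-dimensional by Remark \ref{1-dimensional}; since socle degree $e$ means $A_e\neq 0$, we get $A_e=\Socle(A)$ and $h_A(e)=1$, and likewise $h_B(e)=1$. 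Consequently the degree-$e$ part of $\pi$ is a surjection $B_e\to A_e$ of one-dimensional spaces, hence an isomorphism, so $K_e=0$.

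Next I would use Gorenstein duality for $B$ to propagate this vanishing downward. Suppose $K\neq 0$ and pick a nonzero homogeneous element $x\in K_k$; since $K_e=0$ we have $k<e$. By Theorem \ref{gorenstein-duality} the pairing $B_k\times B_{e-k}\to B_e\cong\C$ is perfect, so there is some $y\in B_{e-k}$ with $xy\neq 0$ in $B_e$. But $K$ is an ideal, so $xy\in K_e$, which we just showed is $0$, a contradiction. Hence $K$ has no nonzero homogeneous element, i.e. $K=0$ and $I=I_{CI}$.

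The argument is short because the essential content is already supplied by the earlier results, and there is no serious obstacle beyond invoking the perfect pairing correctly. The one point that deserves care is the identity $A_e=\Socle(A)$ (equivalently $h_A(e)=1$): it rests on the socle being concentrated in the single top degree $e$, which is exactly where one must combine the meaning of the socle degree with the one-dimensionality of the socle from Remark \ref{1-dimensional}. Equivalently, one could phrase the whole proof through the graded pieces $\pi_k$, using the compatibility $\langle\pi_k(x),\pi_{e-k}(y)\rangle_A=\pi_e(\langle x,y\rangle_B)$ coming from $\pi$ being a ring homomorphism, to conclude that perfectness of both pairings forces every $\pi_k$ to be injective, hence bijective, which gives $I=I_{CI}$ directly.
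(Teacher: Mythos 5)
Your proof is correct, but it follows a genuinely different route from the paper. The paper deduces the corollary from an observation of Otwinowska (end of Section 1 of \cite{otwinowska2002}), a linkage-type statement: since $S/I$ is artinian Gorenstein and $I\supset I_{CI}$ with $I_{CI}$ a complete intersection, one has $I=(I_{CI}:F)$ for a homogeneous $F$ of degree $N_{CI}-N$; equality of socle degrees forces $F$ to be a unit, hence $I=I_{CI}$. You instead argue entirely inside the paper via Theorem \ref{gorenstein-duality}: the top graded pieces of $B=S/I_{CI}$ and $A=S/I$ are both one-dimensional (each equals the socle), so $K_e=0$ for $K=I/I_{CI}$, and then the perfect pairing $B_k\times B_{e-k}\to B_e$ shows any nonzero homogeneous $x\in K_k$ would produce a nonzero element of $K_e$, a contradiction. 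Your argument is self-contained and in fact more general: it never uses that $I_{CI}$ is a complete intersection, only that $S/I_{CI}$ is graded artinian Gorenstein, and it amounts to the standard fact that every nonzero homogeneous ideal of such a ring meets the (one-dimensional, top-degree) socle. The paper's route is shorter on the page but outsources the content to liaison theory; yours costs a few more lines and removes the external dependency. One small point worth making explicit in your write-up is the identification of the socle degree with the top nonzero degree of the grading (so that the $e$ in Theorem \ref{gorenstein-duality} is the common socle degree); you do flag this, and the justification via $\mathfrak{m}\cdot A_e=0$ together with Remark \ref{1-dimensional} is exactly right.
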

\begin{proof}
Let $N$ and $N_{CI}$ denote the~socle degree of $S/I$ and $S/I_{CI}$. Since $I_{CI}\subset I$, we have $N\leqslant N_{CI}$.
 Then we can use the observation from \cite{otwinowska2002} (at the end of Section 1). It implies there exists a~homogeneous polynomial $F\in S$ of degree $N_{CI}-N$ such that $I=(I_{CI}:F)$. By assumption, the~socle degrees $N$ and $N_{CI}$ coicides. So, $F$ is a nonzero constant and $I=I_{CI}$.
\end{proof}

Now, consider the sequence of non-zero values of the Hilbert function $h_I(k)=\dim (S/I)_k$, where $S/I$ is artinian Gorenstein of socle degree $e$. It is also called an $h$\emph{-vector} and denoted by $(h_0,h_1,\dots,h_e)$, where $h_0=1$ and $h_e=1$.

\begin{Def}
	The $h$-vector $(h_0,h_1,\dots,h_e)$ is said to be \emph{unimodal} if it is never strictly increasing after a strict decrease, i.e., the following condition is satisfied for some $1\leqslant k \leqslant e$
	\begin{equation*}
		h_0 \leqslant h_1 \leqslant \dots \leqslant h_{k-1} \leqslant h_k \leqslant h_{k+1} \geqslant \dots \geqslant h_e.
	\end{equation*}
\end{Def}

It is known that all $h$-vectors with $h_1\leqslant 4$ and $h_4\leqslant 33$ are unimodal (see \cite[Theorem 3.1]{migliore2007characterization}). However, in case of $h_1\leqslant 3$ there is even stronger restriction.

\begin{thm}[{\cite[Theorem 4.2]{stanleyhilbert}}]\label{stanley}
	Let $(h_0,h_1,h_2,\dots,h_e)$ be a sequence of nonnegative integers with $h_1\leqslant 3$ and $h_e\neq 0$. Then it is a $h$-vector of a Gorenstein ring if and only if the following two conditions are satisfied:
	\begin{enumerate}
		\item[\emph{(1)}] $h_i=h_{e-i}$ \hspace{10pt} for $0\leqslant i \leqslant e$;
		\item[\emph{(2)}] $(h_0,h_1 -h_0, h_2-h_1,\dots, h_t -h_{t-1})$ is an $O$-sequence, where $t=[\frac{e}{2}]$.
	\end{enumerate}
\end{thm}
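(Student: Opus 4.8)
The plan is to reduce to the case where the embedding dimension equals $h_1$ and then to split on the value of $h_1$. Quotienting $S$ by the linear forms lying in $I$ leaves both the $h$-vector and the Gorenstein property unchanged, so I may assume $A=S/I$ with $S=\C[x_1,\dots,x_{h_1}]$ and $1\le h_1\le 3$; thus $I$ is an Artinian, hence codimension-$h_1$, ideal. Condition (1) is then nothing but Gorenstein duality: Theorem \ref{gorenstein-duality} yields $h_i=h_{e-i}$ directly, so only condition (2) and the realizability carry content. When $h_1\le 2$ the situation is classical, since a Gorenstein ideal of codimension at most $2$ is a complete intersection; here the $h$-vector is that of a complete intersection of two forms, computable from Corollary \ref{socle_degree}, and one checks by hand that the admissible difference sequences are exactly the $O$-sequences whose second entry is $\le 1$. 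The essential case is $h_1=3$.

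For $h_1=3$ the governing tool is the Buchsbaum--Eisenbud structure theorem: a codimension-$3$ Gorenstein ideal is minimally generated by the submaximal Pfaffians of a $(2m+1)\times(2m+1)$ skew-symmetric matrix of forms, and $A$ carries a self-dual minimal free resolution
\begin{equation*}
0\to S(-e-3)\to \bigoplus_{i}S\big({-}(e+3)+a_i\big)\to \bigoplus_{i}S(-a_i)\to S\to A\to 0,
\end{equation*}
with generator degrees $a_1\le\dots\le a_{2m+1}$. Dividing the alternating sum of the twists by $(1-T)^3$ expresses the Hilbert series of $A$ as $N(T)/(1-T)^3$ with numerator $N(T)=1-\sum_i T^{a_i}+\sum_i T^{e+3-a_i}-T^{e+3}$. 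Self-duality of $N(T)$ re-proves the symmetry of condition (1), and since the denominator is $(1-T)^3$, the coefficients of $N(T)$ are exactly the third differences of $(h_k)$. The necessity of (2) then amounts to translating the constraints on which generator degrees $a_i$ may occur into the statement that the first difference $(h_k-h_{k-1})$, read up to the middle $t=[e/2]$, obeys the Macaulay growth bounds of Theorem \ref{macaulay}, i.e. is an $O$-sequence.

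For sufficiency I would run this correspondence backwards. Starting from a symmetric sequence whose difference $(1,h_1-h_0,\dots,h_t-h_{t-1})$ is an $O$-sequence with second entry $\le 2$, I must produce a genuine skew-symmetric matrix of forms whose Pfaffians generate a codimension-$3$ ideal with the prescribed Hilbert function: the generator degrees $a_i$ are forced by matching $N(T)$ to the target, and a sufficiently general matrix with entries of those degrees should cut out an ideal of the maximal codimension $3$, rendering the Pfaffian complex exact and $A$ Gorenstein of socle degree $e$. The main obstacle is exactly this construction --- showing that condition (2) guarantees an admissible, self-dual system of degrees, handling the parity split between even and odd $e$ at the middle of the vector, and verifying that a generic Pfaffian ideal with those degrees realizes the target $h$-vector rather than degenerating to higher codimension or acquiring extra syzygies. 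Outside this Pfaffian bookkeeping, everything reduces to the Gorenstein duality and Macaulay results already established.
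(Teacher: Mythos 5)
The paper does not prove this statement at all: it is imported verbatim as Stanley's theorem \cite[Theorem 4.2]{stanleyhilbert}, so there is no internal proof to compare against. Your outline is in fact the classical route — it is essentially Stanley's own argument, resting on the Buchsbaum--Eisenbud structure theorem for codimension-$3$ Gorenstein ideals — so the strategy is the right one. The reduction to embedding dimension $h_1$, the derivation of condition (1) from Gorenstein duality, and the disposal of $h_1\leqslant 2$ via the complete-intersection classification are all correct and complete as written.

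However, as a proof the proposal has two genuine gaps, both in the $h_1=3$ case, and both are where the entire content of the theorem lives. For necessity of (2), it is not enough to say that the constraints on the Pfaffian generator degrees $a_i$ ``translate into'' the Macaulay growth bounds: to invoke Theorem \ref{macaulay} in the direction you need, you must actually exhibit the difference sequence $(h_0,h_1-h_0,\dots,h_t-h_{t-1})$ as the Hilbert function of some standard graded algebra (or verify the growth inequalities $c_{k+1}\leqslant c_k^{\langle k\rangle}$ directly from the numerator $N(T)$), and you give no mechanism for doing either; this step requires a careful splitting of $N(T)/(1-T)^3$ at the middle degree, with separate bookkeeping for $e$ even and odd. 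For sufficiency, you explicitly leave open the construction of a skew-symmetric matrix of forms whose Pfaffian ideal has codimension exactly $3$ and realizes the prescribed degrees; showing that condition (2) forces an admissible self-dual degree sequence, and that a generic matrix with those entry degrees does not degenerate, is the hard half of Stanley's proof and cannot be waved through with ``a sufficiently general matrix should work.'' So the proposal is a correct roadmap to the standard proof, but not yet a proof: the two steps you flag as obstacles are precisely the theorem.
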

\begin{cor}\label{136676631}
	There is no artinian Gorenstein ring with $h$-vector 
	$$(1,3,6,6,7,6,6,3,1) \hspace{10pt}\text{or}\hspace{10pt}(1,3,6,6,8,6,6,3,1).$$
\end{cor}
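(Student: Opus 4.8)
The plan is to apply Theorem \ref{stanley} directly to each of the two candidate $h$-vectors and show that condition (2) fails in both cases. Both sequences have $h_1 = 3 \leqslant 3$, so the theorem is applicable, and both are visibly symmetric, so condition (1) holds for each. Thus the entire burden falls on condition (2): I must show that the first-difference sequence, taken up to the middle index $t = \lfloor e/2 \rfloor$, is \emph{not} an $O$-sequence. Here $e = 8$, so $t = 4$.

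**First I would** compute the relevant first differences. For the $h$-vector $(1,3,6,6,7,6,6,3,1)$, the sequence $(h_0, h_1 - h_0, \dots, h_4 - h_3)$ is $(1, 2, 3, 0, 1)$. For $(1,3,6,6,8,6,6,3,1)$ it is $(1, 2, 3, 0, 2)$. In each case the key feature is that a zero appears in position $k=3$ followed by a positive entry in position $k=4$. **Then I would** invoke Corollary \ref{macaulay_zero}: once an $O$-sequence takes the value $0$ at some index, it must remain $0$ thereafter. Since both difference sequences have a $0$ in the position indexed by $k=3$ but a nonzero value ($1$ or $2$) in the next position, neither can be an $O$-sequence. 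By Theorem \ref{stanley}, no artinian Gorenstein ring realizes either $h$-vector.

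**The main obstacle**, such as it is, is purely bookkeeping: I must be careful that the difference sequence in condition (2) of Theorem \ref{stanley} is indexed so that its entries $g_k = h_k - h_{k-1}$ are genuinely required to form an $O$-sequence in the variable $k$, and that Corollary \ref{macaulay_zero} applies with the correct index. The $O$-sequence condition requires $g_0 = 1$ (satisfied, since $g_0 = h_0 = 1$) and $g_{k+1} \leqslant g_k^{\langle k \rangle}$ for all $k$; in particular with $g_3 = 0$ we need $g_4 \leqslant 0^{\langle 3 \rangle} = 0$, forcing $g_4 = 0$. Since $g_4 = 1$ (respectively $g_4 = 2$) violates this, the proof concludes. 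I expect no genuine difficulty here — the statement is designed precisely as a ruled-out pair of candidate sequences feeding into the later case analysis of Section~\ref{5}.
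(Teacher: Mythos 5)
Your proposal is correct and follows essentially the same route as the paper: compute the first-difference sequences $(1,2,3,0,1)$ and $(1,2,3,0,2)$, observe that a zero followed by a positive entry violates Macaulay's bound $g_{k+1}\leqslant g_k^{\langle k\rangle}$ (so neither is an $O$-sequence), and conclude via Theorem \ref{stanley}. Your indexing $0^{\langle 3\rangle}$ is in fact the more careful choice (the paper writes $0^{\langle 4\rangle}$, which is immaterial since $0^{\langle d\rangle}=0$ for any $d$).
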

\begin{proof}
	The sequence of the differences $(h_0,h_1-h_0,h_2-h_1,h_3-h_2,h_4-h_3)$ is $(1,2,3,0,1)$ or $(1,2,3,0,2)$. Note that $$2>1>0^{\langle 4 \rangle}=0.$$
	Therefore, $(1,2,3,0,1)$ and $(1,2,3,0,2)$ are not $O$-sequences by Theorem \ref{macaulay}. Consequently, there is no Gorenstein ring with $h$-vector $$(1,3,6,6,7,6,6,3,1)\hspace{10pt}\text{or}\hspace{10pt}(1,3,6,6,8,6,6,3,1)$$ by Theorem \ref{stanley}.
\end{proof}

\section{Estimates for values of Hilbert function}\label{4}

Recall that the $t$-graded piece of $S=\C[x_0,\dots ,x_n]$ corresponds to a set of hypersurfaces of degree~$t$ in~$\IP^n$. Similarly, given a homogeneous ideal $I=(g_1,\dots,g_m)$, one could think of a set $|I_t|$ of hypersurfaces of degree $t$ passing through the subscheme $Z$ defined by the generators $g_i$ with $\deg g_i\leqslant t$. We will call such sets $|I_t|$ linear systems, and by the base locus $\Bs\, |I_t|$ of $|I_t|$ we will mean the~subscheme~$Z$.

Below we list a technical result of Kloosterman that will be used in the next section to analyze the structure of ideal $I$ having many generators of small degree.

 \begin{lem}[{\cite[Lemma 5.2]{kloosterman2022maximal}}] \label{technical_lemma}
 Assume $I \subset S$ be a homogeneous ideal and	$S/I$ is an artinian Gorenstein ring of socle degree $N$. Let $d_k=\min\{t\in \Z \, \mid \,\dim\Bs\, |I_t|\leqslant k\}$. Then 
 \begin{equation*}
 	\sum\limits_{k=-1}^{n-1}d_k\geqslant N+n+1.
 \end{equation*}
 \end{lem}
 \begin{remark}\label{ci_ideal_inside_ag_ideal}
 	By definition of $d_k$ in Lemma \ref{technical_lemma}, it follows that these numbers capture those moments when the dimension of $I_t$ changes. Therefore, the ideal $I$ is generated by at least $n+1$ polynomials $g_0,\dots,g_n$ having $\deg g_i=d_{n-1-i}$ for $i=0,\dots,n$. In other words, $I$ contains a complete intersection ideal of multidegree $(d_{n-1},\dots,d_{-1})$.
 \end{remark}

\begin{prop}\label{new}
Let $I$ be a homogeneous ideal in $S$. Suppose it has $h_I(k)=h_I(k-1)=h_I(k-2)$ and $\Bs|I_k|=\varnothing$. Then $\Bs|I_{k-1}|=\varnothing$.
	\end{prop}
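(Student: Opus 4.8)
The plan is to reduce the statement to the purely algebraic assertion that $I$ has no minimal generator in degree $k$, i.e. that $I_k = S_1\cdot I_{k-1}$, and then to extract this from the flatness of the Hilbert function via Macaulay's and Gotzmann's Theorems. First I would record the reduction. Write $J=(I_{k-1})$ for the ideal generated by the degree $(k-1)$ part of $I$; then $\Bs|I_{k-1}|=V(I_{k-1})=V(J)$ and the degree $k$ component of $J$ is exactly $S_1\cdot I_{k-1}$. Assume $\Bs|I_{k-1}|\neq\varnothing$ and pick $p\in V(I_{k-1})$, so every form in $I_{k-1}$ vanishes at $p$. Since $\Bs|I_k|=\varnothing$ there is $F\in I_k$ with $F(p)\neq 0$; but if $I_k=S_1\cdot I_{k-1}$ then $F=\sum_i \ell_i f_i$ with $\ell_i\in S_1$ and $f_i\in I_{k-1}$, forcing $F(p)=\sum_i\ell_i(p)f_i(p)=0$, a contradiction. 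Hence it suffices to prove $I_k=S_1\cdot I_{k-1}$, equivalently $h_J(k)=h_I(k)$. I would also note that $\Bs|I_k|=\varnothing$ gives $V(I)=\varnothing$, so $S/I$ is artinian; set $C:=h_I(k)=h_I(k-1)=h_I(k-2)$.

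When $C\leqslant k-1$ the claim is immediate. Since $J$ and $I$ agree in degree $k-1$ we have $h_J(k-1)=C$, and Macaulay's Theorem \ref{macaulay} gives $h_J(k)\leqslant C^{\langle k-1\rangle}$, while Remark \ref{macaulay_examples} yields $C^{\langle k-1\rangle}=C$ for $C\leqslant k-1$. As $J\subseteq I$ gives $h_J(k)\geqslant h_I(k)=C$, we conclude $h_J(k)=C=h_I(k)$, hence $J_k=I_k$ and $\Bs|I_{k-1}|=\Bs|I_k|=\varnothing$. Note that only the two equalities $h_I(k)=h_I(k-1)$ are needed in this regime.

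The remaining, and genuinely harder, case is $C\geqslant k$, where $C^{\langle k-1\rangle}>C$ and Macaulay's bound no longer pins down $h_J(k)$; this is exactly where the third equality $h_I(k-2)=h_I(k-1)$ must enter. Here I would argue by contradiction, assuming $\Bs|I_{k-1}|=V(J)\neq\varnothing$, and apply Gotzmann's Theorem \ref{gotzmann} to $J=(I_{k-1})$ (and, if needed, to $(I_{k-2})$) to convert nonemptiness of the base scheme into information about the Hilbert polynomial $p_J$: a nonempty $V(J)$ forces the top Macaulay coefficient $\eps_{k-1}$ of $C$ in base $k-1$ to be nonnegative, and Gotzmann persistence then turns this into a lower bound on the growth of $h_J$. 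The aim is to show that such growth, once propagated through both flatness steps $h_I(k-2)=h_I(k-1)=h_I(k)=C$ and combined with the downward estimates of Corollaries \ref{lower_bound} and \ref{corollary_example}, forces $h_I(k-1)$ strictly above $C$, contradicting flatness — so that in fact $\eps_{k-1}=-1$ and $\Bs|I_{k-1}|=\varnothing$.

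I expect the main obstacle to be precisely this large-$C$ analysis: justifying that the three consecutive equalities of the Hilbert function force Gotzmann's maximal-growth hypothesis (equivalently, that a nonempty base locus $\Bs|I_{k-1}|$ of large degree cannot coexist with a Hilbert function that is flat over $k-2,k-1,k$), and then translating the resulting constraint on $p_J$ and $\eps_{k-1}$ back into the emptiness of $\Bs|I_{k-1}|$. The reduction to $I_k=S_1\cdot I_{k-1}$ and the case $C\leqslant k-1$ are routine; the content of the proposition lies in excluding, via Gotzmann and the extra flatness step, the configurations in which genuinely new degree-$k$ generators clear a nonempty base locus.
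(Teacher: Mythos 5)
Your reduction to the statement $I_k=S_1\cdot I_{k-1}$ is sound, and your treatment of the case $C\leqslant k-1$ is correct. But the case $C\geqslant k$ is not a proof: it is an announced strategy that you yourself identify as the main obstacle, and it is precisely the case the paper actually needs. In every application of this proposition in Section~5 (the $h$-vector $(1,3,6,8,8,8,8,8,6,3,1)$ with $k=5$, and the $h$-vectors $(1,3,6,6,6,6,6,3,1)$, $(1,4,6,6,6,6,6,4,1)$ with $k=4,5$) one has $C\geqslant k$. Moreover, the route you sketch — Gotzmann persistence applied to $J=(I_{k-1})$ — presupposes maximal growth $h_J(k)=h_J(k-1)^{\langle k-1\rangle}$, which is exactly what is unknown; and relating nonemptiness of $V(J)$ to the coefficient $\eps_{k-1}$ concerns the Hilbert polynomial, not the Hilbert function at a single degree, so it is not clear the argument closes without further input.

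The idea you are missing is to apply Macaulay's theorem not to $h_J$ itself but to the \emph{first difference} of the Hilbert function, via a general hyperplane section; this is what makes the large-$C$ case tractable and is where the third equality $h_I(k-2)=h_I(k-1)$ enters. Let $\widehat{I}$ be the ideal generated by $I_1,\dots,I_{k-1}$, so that $h_{\widehat{I}}(t)=h_I(t)$ for $t\leqslant k-1$, and suppose $Z=V(\widehat{I})\neq\varnothing$. Cutting by a general linear form $\ell$ gives $h_{(\widehat{I},\ell)}(t)=h_{\widehat{I}}(t)-h_{\widehat{I}}(t-1)$, which vanishes at $t=k-1$ by hypothesis; Corollary \ref{macaulay_zero} then forces it to vanish for all $t\geqslant k-1$, whence $h_{\widehat{I}}(k)=h_{\widehat{I}}(k-1)=C=h_I(k)$ and so $\widehat{I}_k=I_k$. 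Then $\Bs|I_k|=\Bs|\widehat{I}_k|\supseteq Z\neq\varnothing$, contradicting the hypothesis. The quantity fed into Macaulay's theorem here is the difference, which is $0$ regardless of how large $C$ is — this is why the argument is uniform in $C$, whereas applying Macaulay directly to $h_J$, as you do, only pins down $h_J(k)$ when $C\leqslant k-1$.
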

\begin{proof}
Suppose that $\Bs|I_{k-1}|\neq \varnothing$. Then the ideal $\widehat{I}$ generated by the subspaces $I_j \subset S_j$, where $j=1,\dots,k-1$, 
defines a non-empty subscheme $Z$ in $\Proj\, S$ and has the same values of Hilbert function as $I$ up to degree~$k-1$, i.e., $h_{\widehat{I}}(t)=h_I(t)$ for all $t\leqslant k-1$. A general hyperplane section of $Z$ by $\{\ell=0\}$ gives a short exact sequence
\begin{equation}\tag{4.1}
	0\to \big(S/\widehat{I}\big)_{t-1} \xrightarrow{\cdot \ell} \big(S/\widehat{I}\big)_{t}\to \big(S/(\widehat{I},\ell)\big)_{t}\to 0 \label{short_exact_sequence}
\end{equation}
for any integer $t\geqslant 1$. So, the ideal $(\widehat{I},\ell)$ has the Hilbert function 
\begin{equation*}
	h_{(\widehat{I},\ell)}(t)=h_{\widehat{I}}(t)-h_{\widehat{I}}(t-1)
\end{equation*}
for $t>0$ and $h_{(\widehat{I},\ell)}(0)=1$. Therefore, we have 
$$h_{(\widehat{I},\ell)}(k-1)=0$$
by assumption and $h_{(\widehat{I},\ell)}(t)=0$ for all $t>k-1$ by Theorem \ref{macaulay}. Consequently, $I_{k}$ and $\widehat{I}_k$ coincide, and so do the base loci, which leads to a contradiction.
\end{proof}

At the rest of the section, we obtain a few restrictions on the Hilbert function of a certain artinian Gorenstein ring that will help to estimate the numbers of singularities.

 \begin{prop} \label{old}
 	Given an artinian Gorenstein ring $S/I$ of socle degree $2d-4$, $d\geqslant 6$, assume $h_I(d-4)>2d-7$ and $\Bs|I_{d-1}|=\varnothing$. Then $h_I(k)>2k+1$ and $h_I(2d-4-k)>2k+1$ for $2\leqslant k \leqslant d-4$ and $h_I(k)\geqslant 2d-6$ for $d-3 \leqslant k \leqslant d-1$.
 \end{prop}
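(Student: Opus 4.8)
The plan is to establish the three claimed lower bounds by exploiting the symmetry of the Hilbert function together with the Macaulay/Gotzmann machinery and the unimodality results of Section~\ref{3}. Since $S/I$ is artinian Gorenstein of socle degree $2d-4$, Theorem~\ref{gorenstein-duality} gives $h_I(k)=h_I(2d-4-k)$ for all $k$. Consequently the bound $h_I(k)>2k+1$ for $2\leqslant k\leqslant d-4$ immediately yields $h_I(2d-4-k)>2k+1$, so the third inequality in the statement is a free consequence of the first. Thus the real content is the estimate $h_I(k)>2k+1$ on the range $2\leqslant k\leqslant d-4$, plus the boundary estimate $h_I(k)\geqslant 2d-6$ for $d-3\leqslant k\leqslant d-1$.

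First I would record the hypothesis $h_I(d-4)>2d-7$, i.e.\ $h_I(d-4)\geqslant 2d-6 = 2(d-4)+2$, which is barely stronger than $2(d-4)+1$; the point is to push this to the strict inequality $h_I(d-4)>2(d-4)+1$ and then propagate it \emph{downward} to smaller $k$. The natural tool is Corollary~\ref{lower_bound} (together with its repeated form in Corollary~\ref{corollary_example}): knowing $h_I(d-4)$ is large forces $h_I(d-5),h_I(d-6),\dots$ to be correspondingly large via the functions $C_{\ast d}$. I would set up a descending induction on $k$, with base case $k=d-4$, where at each step I use Corollary~\ref{lower_bound} to deduce $h_I(k-1)\geqslant h_I(k)_{\ast k}$ and check, using the monotonicity of $C\mapsto C_{\ast d}$ and the explicit Macaulay expansions collected in Remark~\ref{macaulay_examples}, that the strict bound $h_I(k)>2k+1$ descends to $h_I(k-1)>2(k-1)+1$. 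The strictness should come precisely from the $\eps_1\geqslant 0$ clause of Corollary~\ref{lower_bound}, which upgrades $\geqslant$ to $>$ whenever the value is large enough to have a nonnegative bottom coefficient; I expect this to hold throughout the relevant range since $h_I(k)>2k+1$ is already well above the threshold where $\eps_1=-1$.

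For the boundary range $d-3\leqslant k\leqslant d-1$ I would argue in the other direction. By symmetry $h_I(d-3)=h_I(d-1)$, and the middle values $h_I(d-3),h_I(d-2),h_I(d-1)$ straddle the socle degree's midpoint. Here the hypothesis $\Bs|I_{d-1}|=\varnothing$ enters: it controls the growth of the Hilbert function near the middle and, combined with unimodality, prevents the values from dipping below $2d-6$. Concretely, the $h$-vector is symmetric about $k=d-2$, and by the unimodality statement (Theorem~\ref{stanley} and the discussion preceding it) it cannot strictly increase after a strict decrease; since $h_I(d-4)\geqslant 2d-6$ sits just left of this plateau, unimodality forces $h_I(d-3)\geqslant h_I(d-4)\geqslant 2d-6$, and then $h_I(d-2)\geqslant h_I(d-3)\geqslant 2d-6$, with $h_I(d-1)=h_I(d-3)\geqslant 2d-6$ by symmetry. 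The emptiness of the base locus is what guarantees that the ideal is ``full'' enough in degree $d-1$ for this unimodal comparison to be valid rather than being short-circuited by a premature collapse of $h_I$.

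The main obstacle I anticipate is controlling the strictness of the inequalities under the $\ast d$ operation: the function $C\mapsto C_{\ast d}$ can be close to halving $C$, so one must verify that the slack between $h_I(k)$ and $2k+1$ is never eroded as $k$ decreases, and in particular that no value $h_I(k)$ falls into the narrow regime ($h\leqslant k$ in the notation of Corollary~\ref{corollary_example}) where the linear lower bound $2k+1$ is not available. Handling this cleanly will likely require splitting into the cases of Corollary~\ref{corollary_example} according to the size of $h_I(k)$ relative to $k$, $2k$, and checking that the case $h>2k$ (where $h_I(k-1)\geqslant 2(k-1)+1$ comes out with room to spare) is the only one that can occur given the starting bound at $k=d-4$. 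I would keep the bookkeeping of the Macaulay expansion coefficients $\eps_i$ explicit at each descent step to certify that $\eps_1\geqslant 0$ persists, since this is exactly the condition securing the strict inequality demanded by the proposition.
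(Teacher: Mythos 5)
Your treatment of the range $2\leqslant k\leqslant d-4$ --- descending induction from $h_I(d-4)>2d-7$ via Corollary~\ref{lower_bound} and explicit Macaulay expansions, followed by Gorenstein duality to transfer the bound to $h_I(2d-4-k)$ --- is exactly the paper's argument. (One small correction: the strictness at each step does not in general come from the $\eps_1\geqslant 0$ clause of Corollary~\ref{lower_bound}, since for $C=2k+2$ in base $k$ one typically has $\eps_1=-1$; it comes from the direct computation $(2k+2)_{\ast k}=2k>2(k-1)+1$ together with the monotonicity of $C\mapsto C_{\ast k}$.)

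The genuine gap is in the boundary range $d-3\leqslant k\leqslant d-1$. You deduce $h_I(d-3)\geqslant h_I(d-4)$ and $h_I(d-2)\geqslant h_I(d-3)$ from unimodality combined with symmetry, but unimodality is not available here: the results quoted in the paper require either $h_1\leqslant 3$ (Theorem~\ref{stanley}) or $h_1\leqslant 4$ together with $h_4\leqslant 33$ (\cite{migliore2007characterization}), and neither hypothesis appears in Proposition~\ref{old} nor is verifiable at this stage (the middle value is a priori bounded only by $\dim S_4=35$, and unimodality of codimension-four Gorenstein $h$-vectors is open in general). Moreover, your appeal to $\Bs|I_{d-1}|=\varnothing$ (``the ideal is full enough in degree $d-1$'') is not an argument; indeed, if unimodality were available your boundary estimate would not use that hypothesis at all, which is a warning sign, since the hypothesis is essential. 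The paper's mechanism is different: by duality $h_I(d)=h_I(d-4)\geqslant 2d-6$, so Corollary~\ref{lower_bound} gives $h_I(d-1)\geqslant (2d-6)_{\ast d}=2d-7$; if equality held, Macaulay's bound would force $h_I(d)=(2d-7)^{\langle d-1\rangle}=2d-6$, i.e.\ maximal growth, and Gotzmann's Theorem~\ref{gotzmann} would then give the ideal generated by $I_d$ a linear Hilbert polynomial, hence a one-dimensional base locus, contradicting $\Bs|I_{d-1}|=\varnothing$. The same argument one degree lower excludes $h_I(d-2)=2d-7$, and $h_I(d-3)=h_I(d-1)$ by duality. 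You would need to replace the unimodality step with this (or an equivalent) use of Gotzmann's theorem to make the base-locus hypothesis do actual work.
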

\begin{proof}
For the reader's convenience	 we reproduce the proof from \cite[Lemma 5.4]{kloosterman2022maximal}. 

By assumption, $h_I(d-4)> 2(d-4)+1$. From Remark \ref{macaulay_examples} the Macaulay expansion of $2d-7$ in base $d-4$ is
$$2d-7 = \sum\limits_{i=1}^{d-4}\binom{i+\eps_i}{i}=\binom{d-3}{d-4}+\binom{d-4}{d-5}+\binom{d-7}{d-6}+\dots+\binom{0}{1}.$$
So, we have 
\begin{multline*}
	(2d-7)_{\ast d-4}=\sum\limits_{i=2}^{d-4}\binom{i+\eps_i-1}{i-1}
	=\binom{d-4}{d-5}+\binom{d-5}{d-6}+\binom{d-8}{d-7}+\dots+\binom{0}{1}=2d-9.
\end{multline*}

Applying Corollary \ref{lower_bound}, we get
$$h_I(d-5)\geqslant \big(h_I(d-4)\big)_{\ast d-4}> (2d-7)_{\ast d-4}=2d-9=2(d-5)+1 =2(d-5)+1.$$ 
Then $h_I(d-5)>2(d-5)+1$ and using induction one can show $h_I(k)>2k+1$ for $2\leqslant k \leqslant d-4$. Indeed, we write the Macaulay expansion of $2k+1$ in base $k$
$$
2k+1=\sum\limits_{i=1}^k \binom{i+\eps_i}{i}=\binom{k+1}{k}+\binom{k}{k-1}+\binom{k-3}{k-2}+\dots+\binom{0}{1}
$$
and obtain
$$
h_I(k-1)\geqslant ( h_I(k) )_{\ast k}>(2k+1)_{\ast k}=\binom{k}{k-1}+\binom{k-1}{k-2}+0+\dots+0=2(k-1)+1.
$$
Furthermore, application of Gorenstein duality to $h_I(k)$ for  $2\leqslant k \leqslant d-4$ gives the inequality 
$$
h_I(k)=h_I(2d-4-k)>2k+1.$$

 Since $h_I(d)\geqslant 2d-6$, the Macaulay expansion of $2d-6$ in base $d$ is either
 $$2d-6=d=\binom{d}{d}+\binom{d-1}{d-1}+\dots +\binom{1}{1}$$
 if $d=6$ or
 $$ 2d-6=\binom{d+1}{d}+\binom{d-1}{d-1}+\dots+\binom{7}{7}+\binom{5}{6}+\dots+\binom{0}{1}$$
 if $d>6$. So, we have 
 $$(2d-6)_{\ast d}=\binom{d-1}{d-1}+\binom{d-2}{d-2}+\dots+\binom{1}{1}=2d-7,$$
 $$(2d-6)_{\ast d}=\binom{d}{d-1}+\binom{d-2}{d-2}+\dots+\binom{6}{6}+\binom{4}{5}+\binom{0}{1}=2d-7$$
 for $d=6$ and $d>6$, respectively. Then Corollary \ref{lower_bound} implies that $h_I(d-1)\geqslant 2d-7$. If $h_I(d-1)$ equals $2d-7$, then 
 $$
2d-7=\binom{d}{d-1}+\binom{d-2}{d-2}+\dots+\binom{6}{6}+\binom{4}{5}+\dots+\binom{0}{1}$$
 and due to Theorem \ref{macaulay} we have
 $$h_I(d)\leqslant(h_I(d-1))^{\langle d-1\rangle}=\binom{d+1}{d}+\binom{d-1}{d-1}+\dots+\binom{7}{7}+\binom{5}{6}+\dots+\binom{1}{2}=2d-6.$$
 Hence, $h_I(d)=2d-6$ and from Theorem \ref{gotzmann} the Hilbert polynomial of the ideal $(I_d)$ equals $k+(d-6)$, where $(I_d)$ is generated by the subspace $I_d$. However, by assumption, the base locus of $|I_{d-1}|$ is empty and the base locus of $|I_d|$ is empty too. Therefore, we have $h_I(d-1)\geqslant 2d-6$. Similarly, 
 	for $2d-6$ the Macaulay expansion in base $d-1$ is
 	$$2d-6=\binom{d}{d-1}+\binom{d-2}{d-2}+\dots+\binom{5}{5}+\binom{3}{4}+\dots+\binom{0}{1},$$
 	and $h_I(d-2)\geqslant (2d-6)_{\ast d-1}=2d-7$. Suppose $h_I(d-2)=2d-7$. Then the base locus of $I_{d-1}$ contains a 1-dimensional component, which contradicts the assumption that $\Bs|I_{d-1}|$. So, we have $h_I(d-2)\geqslant 2d-6$.
\end{proof}

\begin{lem}\label{auxiliary_cii}
		Given an artinian Gorenstein ring $S/I$ of socle degree $2d-4$, $d\geqslant 6$, assume the following conditions are satisfied:
		\begin{enumerate}
		\item[\emph{(1)}] $h_I(1)\geqslant 3$;
		\item[\emph{(2)}] $h_I(k)=h_I(2d-4-k)\geqslant 2k+2$ for $2\leqslant k \leqslant d-4$;
		\item[\emph{(3)}]  $h_I(k)\geqslant 2d-6$ for $d-3\leqslant k \leqslant d-1$.
		\end{enumerate}
		Then the inequality holds
		\begin{equation}\label{inequality_for_singularities}\tag{4.2}
			 \sum\limits_{k=0}^{2d-4} h_I(k)>2(d-2)(d-1)
		\end{equation}
		except when $d= 7$ and $I$ has $h$-vector
		$$(1,3,6,8,8,8,8,8,6,3,1)$$
		or when $d=6$ and $h$-vector of $I$ is
		$$(1,3,6,6,6,6,6,3,1),\hspace{10pt} (1,3,6,6,7,6,6,3,1), \hspace{10pt}(1,3,6,7,6,7,6,3,1), \hspace{10pt} (1,3,7,6,6,6,7,3,1), $$
		 $$(1,4,6,6,6,6,6,4,1), \hspace{10pt} (1,3,6,6,8,6,6,3,1).$$
\end{lem}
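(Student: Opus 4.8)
The plan is to bound the sum $\sum_{k=0}^{2d-4}h_I(k)$ from below using conditions (1)--(3) together with the Gorenstein symmetry, and then to determine, for each $d$, exactly when this bound already forces the strict inequality \eqref{inequality_for_singularities}. Since $S/I$ is artinian Gorenstein of socle degree $2d-4$, Theorem~\ref{gorenstein-duality} gives $h_I(k)=h_I(2d-4-k)$, so the $h$-vector is symmetric about the middle degree $d-2$ and I would first record
\[
\sum_{k=0}^{2d-4} h_I(k) = 2\sum_{k=0}^{d-3} h_I(k) + h_I(d-2).
\]
Substituting $h_I(0)=1$, the bound $h_I(1)\geq 3$ from (1), the bounds $h_I(k)\geq 2k+2$ for $2\leq k\leq d-4$ from (2), and $h_I(d-3),\,h_I(d-2)\geq 2d-6$ from (3), and using the elementary evaluation $\sum_{k=2}^{d-4}(2k+2)=d^2-5d$, yields the uniform lower bound
\[
\sum_{k=0}^{2d-4} h_I(k)\geq 2d^2-4d-10 .
\]

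Comparing with $2(d-2)(d-1)=2d^2-6d+4$, the difference is $2d-14$. Hence for $d\geq 8$ the estimate already gives $2d^2-4d-10>2d^2-6d+4$, so \eqref{inequality_for_singularities} holds with no exceptions. For $d=7$ the two quantities coincide (both equal $60$), and because the lower bound was obtained by adding the individual estimates, equality forces every one of them to be sharp: $h_I(1)=3$, $h_I(2)=6$, $h_I(3)=8$, $h_I(4)=8$, $h_I(5)=8$. By symmetry this pins down the single $h$-vector $(1,3,6,8,8,8,8,8,6,3,1)$, which is precisely the listed exception; for every other admissible $h$-vector at least one estimate is strict, so \eqref{inequality_for_singularities} holds.

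The remaining and genuinely delicate case is $d=6$, where the uniform bound gives only $38<40=2(d-2)(d-1)$ and therefore does not by itself force the inequality. Here the socle degree is $8$, and symmetry gives $\sum_{k=0}^{8}h_I(k)=2+2h_1+2h_2+2h_3+h_4$ under the constraints $h_1\geq 3$, $h_2\geq 6$, $h_3\geq 6$, $h_4\geq 6$. Writing the excesses $a=h_1-3$, $b=h_2-6$, $c=h_3-6$, $e=h_4-6$, all nonnegative, the failure of \eqref{inequality_for_singularities} means $\sum\leq 40$, i.e. $2a+2b+2c+e\leq 2$. I would then carry out the finite enumeration of the nonnegative integer solutions of this inequality: there are exactly six, namely $(a,b,c,e)$ equal to $(0,0,0,0)$, $(0,0,0,1)$, $(0,0,0,2)$, $(1,0,0,0)$, $(0,1,0,0)$, $(0,0,1,0)$, which reconstruct precisely the six $h$-vectors in the statement.

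The main obstacle is thus not any single hard step but the bookkeeping for $d=6$: one must verify that the weighted budget $2a+2b+2c+e\leq 2$ correctly encodes $\sum\leq 40$ and that its solution set is complete, so that no admissible $h$-vector with $\sum\leq 40$ is overlooked. I emphasize that realizability as a Gorenstein ring is not required at this stage — candidates such as $(1,3,6,6,7,6,6,3,1)$ and $(1,3,6,6,8,6,6,3,1)$ survive the enumeration even though Corollary~\ref{136676631} shows they are not realizable, and they are meant to be discarded by that corollary (and by geometric arguments) only afterwards in Section~\ref{5}.
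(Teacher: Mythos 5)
Your proposal is correct and follows essentially the same route as the paper: both lower-bound $\sum_{k=0}^{2d-4}h_I(k)$ by summing the hypotheses (the paper packages this as a comparison with the auxiliary sequence $h'$ coming from the complete intersection of multidegree $(1,2,d-2,d-1)$, but the arithmetic is identical, giving the same gap $2d-14$), and then both finish $d=7$ and $d=6$ by the same finite enumeration of how the slack can be distributed subject to Gorenstein symmetry. Your explicit parametrization by the excesses $(a,b,c,e)$ with budget $2a+2b+2c+e\leqslant 2$ is a slightly tidier way to organize the $d=6$ case split than the paper's $S=38,39,40$ discussion, and you correctly observe that realizability of the candidate $h$-vectors is not needed at this stage.
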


\begin{proof}
	In order to prove inequality \eqref{inequality_for_singularities}, consider an auxiliary number sequence $h'(k)$, $k=0,\dots 2d-4$. Let $h'(0)=1$, $h'(1)=3$, $h'(k)=2k+1$ for $2\leqslant k \leqslant d-3$, and $h'(d-2)=2(d-2)$.

	Using conditions (1)-(3), we compare the values $h_I(k)$ and $h'(k)$ in Table \ref{table}.
	
	\begin{table}[h!]
	\centering
		\begin{tabular}{| c | c | c | c | c | c | c | c | c | c | c |}\hline 
		$k$ & 0 & 1 & 2  & \dots & $m$ & \dots & $d-4$ & $d-3$ & $d-2$ & \dots \\ \hline
		$h_I(k)$ & 1 & $\geqslant 3$ & $\geqslant 6$ & \dots & $\geqslant 2m+2$ & \dots & $\geqslant 2d-6$ &  $\geqslant 2d-6$ & $\geqslant 2d-6$ & \dots   \\ \hline
		$h'(k)$ & 1 & 3 & 5 & \dots & $2m+1$ & \dots & $2d-7$ & $2d-5$ & $2d-4$ & \dots \\ \hline
	\end{tabular}
	\caption{Values of Hilbert function of ideal $I$.}
	\label{table}
	\end{table}
	One can see that for fixed $d$ the sum of $h_I(k)$, $0\leqslant k\leqslant 2d-4$, is at least $2(d^2-2d-5)$ while the~sum of $h'(k)$ equals exactly $2(d-2)(d-1)$. So, one has 
		\begin{multline*}
		\sum\limits_{k=0}^{2d-4}\bigg(h_I(k)-h'(k)\bigg)\geqslant  \\
		\geqslant 2\sum\limits_{k=2}^{d-4}\bigg((2k+2)-(2k-1)\bigg)+ 2 \bigg( (2d-6) -(2d-5)\bigg) +\bigg((2d-6)-2(d-2)\bigg)=\\
		=2d-14,
	\end{multline*}
	which proves the lemma for $d>7$. 
	
	Now, suppose $d=7$. Then $2(d^2-2d-5)=2(d-2)(d-1)=60$ and therefore
	$$\sum\limits_{k=0}^{2d-4}h_I(k)\geqslant 1+3+6+8+\dots+8+6+3+1=60= 2(d-2)(d-1).$$ Here, the equality is attained if $I$ has $h$-vector $(1,3,6,8,8,8,8,8,6,3,1)$. 
	
	Let $d=6$. In this case, we have 
	$$\sum\limits_{k=0}^{2d-4}h_I(k)\geqslant 1+3+6+6+6+6+6+3+1= 38$$
	and $2(d-2)(d-1)=40$. Denote the sum by $S$. If $S=38$, then $h$-vector is $(1,3,6,6,6,6,6,3,1)$. \mbox{If $S=39$,} then due to the Gorenstein duality the middle value $h_I(d-2)$ is greater than the lower bound by 1. So, the $h$-vector is $(1,3,6,6,7,6,6,3,1)$. If $S=40$, then we have four possibilities to distribute additional 2 among values $h_I(k)=h_I(2d-4-k)$ and therefore $h$-vector is one of the following
	$$(1,4,6,6,6,6,6,4,1), \hspace{10pt}(1,3,7,6,6,6,7,3,1), \hspace{10pt} (1,3,6,7,6,7,6,3,1), \hspace{10pt} (1,3,6,6,8,6,6,3,1).$$
	For $S>40$ the inequality \eqref{inequality_for_singularities} holds.

\end{proof}
\begin{remark}
The sequence $h'(k)$ arises from the values of Hilbert function of complete intersection ideal of multidegree $(1,2,d-2,d-1)$.	
\end{remark}

\section{Nodal hypersurfaces of degree 6}\label{5}

Kloosterman's recent work \cite{kloosterman2022maximal} significantly advanced our understanding of the Ciliberto--Di Gennaro conjecture for hypersurfaces of high degree ($d\geqslant 7$). In this section, we build upon his approach and extend these results to the case of degree 6.

We use the notion of defect to detect the factoriality property.

\begin{Def}
	The \emph{defect} $\delta(X)$ of a nodal hypersurface $X\subset \IP^4$ is a number defined by the formula
\begin{equation*}
	h^{4}(X,\Q)-h^{2}(X,\Q).
\end{equation*}
\end{Def}

\begin{remark}[{\cite[Remark 3.5]{kloosterman2022maximal}}]\label{defect_and_factoriality}
Any nodal hypersurface $X$ satisfies the Poincar\'e duality property. In particular, there is an isomorphism $H^{4}(X,\Q)\simeq H_{2}(X,\Q)$ and therefore the Weil divisor class group~$\Cl(X)$ has rank equal to $h^{4}(X,\Q)$. On the other hand, the Lefschetz hyperplane theorem implies that the~Picard~group $\Pic(X)$ is isomorphic to $H^2(X,\Q)$. So, the defect $\delta(X)$ measures $\rk\,\Cl(X)/\Pic(X)$ and it is zero only for factorial varieties.
\end{remark}

The defect of a nodal hypersurface $X$ is deeply connected to the number of singularities.

\begin{Def}
		Let 
		$$X=\{F(x_0,\dots,x_4)=0\}\subset \IP^4=\Proj \,\C[x_0,x_1,x_2,x_3,x_4]$$
		be a hypersurface. Then one can define the \emph{Jacobian ideal} of $X$ as
		$$\Jac=\big(\frac{\partial}{\partial x_0}F,\dots, \frac{\partial}{\partial x_4}F\big)\subset \C[x_0,x_1,x_2,x_3,x_4].$$
\end{Def}
Let $J$ be the ideal of nodes, that is,
$$J=I(\Sing(X))=I(V(\Jac)).$$
Then $J$ is the saturation of $\Jac$ (see \cite[Exercise III-16]{eisenbud2006geometry} for definition). 

\begin{prop}[{\cite[Proposition 3.3]{dimca1990}}] \label{defect}
	Assume $X\subset \IP^4$ is a nodal hypersurface of degree $d$. Let $J$ be the ideal of nodes. Then
	\begin{equation*}
		\delta(X)=\#\Sing(X)-h_J(2d-5),
	\end{equation*}
	where $\#\Sing(X)$ is the number of singular points of $X$.
\end{prop}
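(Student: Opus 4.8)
The plan is to prove that both sides of the asserted identity compute one and the same quantity, the \emph{superabundance} of the node set in degree $2d-5$, and then to supply the Hodge-theoretic input that links this superabundance to the defect.

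First I would rewrite the right-hand side. Write $\Sigma=\Sing(X)$ and $\mu=\#\Sigma$, and recall that $J=I(\Sigma)$ is the homogeneous ideal of the reduced, zero-dimensional scheme $\Sigma\subset\IP^4$. The graded piece $(S/J)_{2d-5}$ is the image of the restriction map
\begin{equation*}
\mathrm{ev}\colon S_{2d-5}=H^0\big(\IP^4,\mathcal{O}(2d-5)\big)\longrightarrow H^0\big(\Sigma,\mathcal{O}_\Sigma(2d-5)\big)\cong \C^{\mu},
\end{equation*}
so that $h_J(2d-5)=\rk\,\mathrm{ev}$ and hence
\begin{equation*}
\#\Sing(X)-h_J(2d-5)=\dim\operatorname{coker}\mathrm{ev}.
\end{equation*}
This is exactly the number of conditions the nodes fail to impose independently on hypersurfaces of degree $2d-5$. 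It therefore suffices to identify $\delta(X)$ with this superabundance.

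Second, I would record the geometric meaning of the left-hand side. By Remark~\ref{defect_and_factoriality}, $\delta(X)=\rk\Cl(X)-\rk\Pic(X)$ measures the Weil divisor classes on $X$ not pulled back from $\IP^4$. Blowing up the nodes yields a resolution $\pi\colon\tilde X\to X$ whose exceptional divisors are quadrics $E_i\cong\IP^1\times\IP^1$, and the comparison exact sequence for a resolution of isolated singularities expresses $H^4(X,\Q)$, whose rank is $\rk\Cl(X)$, through $H^4(\tilde X,\Q)$ together with the classes carried by the $E_i$. In this way the defect is rewritten as the number of independent relations among the $\mu$ vanishing cycles attached to the nodes, a linear-algebra quantity on the resolution.

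Third, and this is the crux, I would match the two numbers via Griffiths' description of the Hodge filtration. For a smooth hypersurface the residue isomorphism identifies $H^{2,1}_{\mathrm{prim}}$ with $(S/\Jac)_{2d-5}$, and hard Lefschetz forces $h^4=h^2=1$, i.e.\ zero defect. Passing to the nodal $X$, each node contributes one vanishing cycle to the middle cohomology, and the pole-order (Gauss--Manin) computation of $H^*(\IP^4\setminus X)$, carried out with the \emph{saturated} ideal $J$, which by the discussion preceding the statement is the saturation of $\Jac$, shows that the extra $(2,2)$-classes produced on $\tilde X$ are counted precisely by $\dim\operatorname{coker}\mathrm{ev}$ in degree $2d-5$. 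Combining with the first two steps gives $\delta(X)=\#\Sing(X)-h_J(2d-5)$. I expect the main obstacle to be exactly this third step: controlling the local contribution of each node in the pole-order spectral sequence and establishing its degeneration, so that the global superabundance concentrated in the single critical degree $2d-5$ accounts for the whole defect with no correction from the neighbouring graded pieces. The first two reductions are essentially formal, whereas this degeneration together with the node-by-node local analysis carries all of the Hodge-theoretic weight and is the technical heart of Dimca's argument \cite{dimca1990}.
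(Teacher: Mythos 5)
The paper does not prove this proposition at all: it is quoted verbatim from Dimca \cite{dimca1990} (Proposition 3.3 there), so there is no internal argument to compare yours against. Your first two reductions are correct and essentially formal: since $J$ is the saturated ideal of the reduced zero-dimensional scheme $\Sing(X)$, one indeed has $h_J(2d-5)=\rk\,\mathrm{ev}$ for the evaluation map $S_{2d-5}\to\C^{\mu}$, hence the right-hand side is $\dim\operatorname{coker}(\mathrm{ev})$, the superabundance in degree $2d-5$; and the defect is correctly read off from the comparison of $H^4$ of $X$ with that of the small or big resolution, the exceptional quadrics $\IP^1\times\IP^1$ accounting for the discrepancy between $\Cl(X)$ and $\Pic(X)$.

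The genuine gap is your third step, and you say so yourself: the identification of $\delta(X)$ with the superabundance \emph{in the single degree $2d-5$} is asserted, not established. Nothing in your argument explains why the critical degree is $2d-5$ rather than, say, $3d-5$ (which also occurs in the pole-order description of $H^4(\IP^4\setminus X)$), nor why no other graded piece contributes; this requires the actual computation of the Hodge filtration on the cohomology of the complement via the pole-order spectral sequence, its degeneration, and the local analysis at each node. Since that computation is the entire content of the proposition, what you have written is a correct road map that, like the paper itself, ultimately defers the proof to \cite{dimca1990}. As a self-contained proof it is incomplete; as a reconstruction of the intended argument it identifies the right ingredients and the right place where the work lies.
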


Now, studying factoriality of nodal hypersurfaces we can switch from a geometric point of view to an algebraic one and investigate the behavior of Hilbert functions of zero-dimensional subschemes in~$\IP^4=\Proj(R)$.

If $X$ is not factorial, then by Remark \ref{defect_and_factoriality} it has a positive defect, i.e. $h_J(2d-5)<\#\Sing(X)$ due to Proposition \ref{defect}. Moreover, we have $h_J(2d-4)>0$. Otherwise, $h_J(k)=0$ for all $k\geqslant 2d-4$ due to Corollary \ref{macaulay_zero} and so $p_J(k)=0$. By taking a general hyperplane section of~$\Sing(X)$, one obtains an empty set in $\IP^3=\Proj(S)$. Assume the hyperplane is given by an equation $\{\ell=0\}$. Without loss of generality, we can choose $\ell$ equal to $x_4$. So, we get an artinian coordinate ring $R/(J,x_4)$, where $(J,x_4)$ is the~sum of $J$ and $(x_4)$. In particular, the ring $R/(J,\ell)$ is isomorphic to~$S/\overline{J}$, where $S=R/(x_4)\simeq\C[x_0,\dots,x_3]$ and $\overline{J}$ is the image of $J$ in~$S$. Then we construct an ideal $I\supset \overline{J}$ as follows. Let~$W$ be a codimension 1 subspace in $S_{2d-4}$ that contains~$J_{2d-4}$. Set $I=\bigoplus\limits_{k\geqslant 0} I_k$, where 
\begin{equation}\label{construction}\tag{5.1}
	I_k=\begin{cases}
		\{g\in S_k\,\mid \, g S_{2d-4-k}\subset W\}, \hspace{15pt}k\leqslant 2d-4;\\
		S_k, \hspace{123pt}k>2d-4.
	\end{cases}
\end{equation}
By construction, the quotient ring $S/I$ is local and the degree of any element $g\in S/I$ is at most~$2d-4$. Note that the socle of $S/I$ contains only polynomials of degree $2d-4$. Indeed, $g\in (S/I)_k$ for $k<2d-4$ cannot annihilate all images of $x_i$ in $S/I$. Otherwise, we get a contradiction since $V(I)$ is not just one point and $S/I$ is not local. Then the condition $\dim  (S/I)_{2d-4}= \text{codim } W=1$ implies that $\text{Soc} (S/I)$ is a 1-dimensional vector space over $\C$. So, it is generated by one element. Thus, the ring $S/I$ is Gorenstein of socle degree $2d-4$.

\begin{lem}[{\cite[Lemma 5.3]{kloosterman2022maximal}}]\label{contains_ci}
Suppose $d\geqslant 6$. Let $X \subset \IP^4$ be a nodal hypersurface of degree~$d$. Assume that $X$ has at most $2(d-2)(d-1)$ nodes and positive defect. Let $I$ be the ideal defined in~\emph{(\ref{construction})}.

If $h_I(d-4)\leqslant 2d-7$ then $\Sing(X)$ contains a subscheme which is a complete intersection of multidegree $(1,1,d-1,d-1)$ or of multidegree $(1,2,d-2,d-1)$.
\end{lem}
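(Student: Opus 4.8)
The plan is to feed $S/I$ into Kloosterman's counting lemma and let the numerical hypothesis collapse the generator degrees onto one of two rigid patterns, which is then read off on $\Sing(X)$. First I would apply Lemma~\ref{technical_lemma} to the artinian Gorenstein ring $S/I$, for which $n=3$ and the socle degree is $N=2d-4$. This yields $d_{-1}+d_0+d_1+d_2\geqslant 2d$ for the base-locus thresholds $d_k=\min\{t\mid\dim\Bs\,|I_t|\leqslant k\}$, and by Remark~\ref{ci_ideal_inside_ag_ideal} the ideal $I$ contains a complete intersection $I_{CI}$ of multidegree $(d_2,d_1,d_0,d_{-1})$, ordered $d_2\leqslant d_1\leqslant d_0\leqslant d_{-1}$. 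By Corollary~\ref{socle_degree} this $I_{CI}$ has socle degree $d_2+d_1+d_0+d_{-1}-4\geqslant 2d-4$. The point of the numerical analysis is to prove that equality holds: once $\sum_k d_k=2d$, the rings $S/I$ and $S/I_{CI}$ share the socle degree $2d-4$, and Corollary~\ref{complete_intersection_coincides_with_ideal} forces $I=I_{CI}$.

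To pin the multidegree I would pass the hypothesis through Gorenstein duality (Theorem~\ref{gorenstein-duality}), which gives $h_I(d-4)=h_I(d)$, so that the assumption reads $h_I(d-4)\leqslant 2d-7$. The basic estimate is $h_I(t)\geqslant h_{\Bs\,|I_t|}(t)$: the degree-$t$ part of $I$ equals that of the ideal generated by $I_{\leqslant t}$, which surjects onto the homogeneous coordinate ring of the base scheme. Hence if $\Bs\,|I_{d-4}|$ contained a surface, the right-hand side would be at least $\binom{d-2}{2}>2d-7$ for $d\geqslant 6$, and if it were a non-degenerate curve its degree would be at least $3$ and its Hilbert function would grow at least like $3t$, again exceeding $2d-7$; so $\Bs\,|I_{d-4}|$ is at most a line or a plane conic. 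Tracking this back through the thresholds, together with $\sum_k d_k\geqslant 2d$ and the ordering, should leave exactly $(1,1,d-1,d-1)$ (nodes degenerating onto a line, $d_1=1$) and $(1,2,d-2,d-1)$ (nodes degenerating onto a conic, $d_1=2$); both sum to $2d$, so $I=I_{CI}$ by the previous step.

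Finally I would descend to $\Sing(X)\subset\IP^4$. The generators of $I=I_{CI}$ of degree at most $d-2$ lie in $\overline{J}$, since the only generators adjoined in~\eqref{construction} have degree between $d-1$ and $2d-4$; they therefore lift to forms in $J$, i.e.\ to hypersurfaces through every node — two hyperplanes in the first case, a hyperplane and a quadric in the second. Hence $\Sing(X)$ lies on a plane $\Pi$, respectively on a quadric surface $Y$. Because $X$ is non-factorial it has at least $(d-1)^2$ nodes, now all lying on $\Pi$ (resp.\ $Y$); a node count shows that a degree-$d$ hypersurface cannot carry this many singular points along a plane (resp.\ quadric surface) unless that surface is a component of $X$, so $\Pi\subset X$ (resp.\ $Y\subset X$). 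Writing $F=L_1A+L_2B$ with $\Pi=\{L_1=L_2=0\}$ then identifies $\Sing(X)\cap\Pi$ with the complete intersection $\{L_1=L_2=A=B=0\}$ of multidegree $(1,1,d-1,d-1)$, which is contained in $\Sing(X)$; the quadric case is analogous and produces multidegree $(1,2,d-2,d-1)$.

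The main obstacle is the multidegree determination of the second step, and in particular the exclusion of degenerate configurations that are numerically almost as cheap: a conic base locus whose plane does not visibly sit inside $I$ (so that the needed linear form may fail to descend into $\overline{J}$), or, for $d\geqslant 7$, a base scheme that is two-dimensional in low degrees but collapses to a curve by degree $d-4$. The single value $h_I(d-4)$ does not separate these from the two admissible patterns, so the argument must combine the backward Macaulay estimates (Corollaries~\ref{lower_bound} and~\ref{corollary_example}), the self-duality $h_I(k)=h_I(2d-4-k)$, and Gotzmann's theorem (Theorem~\ref{gotzmann}) reading base-locus dimensions off maximal growth; the bookkeeping is tightest at $d=6$, where $d-4=2$ sits at the very start of the $h$-vector and leaves almost no room. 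By contrast, once $\Sing(X)$ is known to lie on the plane or the quadric, the geometric descent in the third step is routine.
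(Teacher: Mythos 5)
First, a point of comparison that matters: the paper does not prove this statement. Lemma~\ref{contains_ci} is imported verbatim from \cite[Lemma 5.3]{kloosterman2022maximal} and used as a black box, so there is no in-paper argument to measure yours against. That said, your architecture --- Lemma~\ref{technical_lemma} giving $\sum_k d_k\geqslant 2d$, a complete intersection $I_{CI}\subset I$ via Remark~\ref{ci_ideal_inside_ag_ideal}, rigidity via Corollary~\ref{complete_intersection_coincides_with_ideal}, and descent to $\Sing(X)$ through $F=L_1A+L_2B$ --- has the right shape, and your opening estimates are correct: a surface, or a curve of degree at least $3$, inside $\Bs|I_{d-4}|$ would force $h_I(d-4)\geqslant\binom{d-2}{2}$ or $h_I(d-4)\geqslant 3(d-4)$, both exceeding $2d-7$ for $d\geqslant 6$.

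The genuine gap is the one you flag yourself, and it is not closable with the tools as you have deployed them. What you actually establish is only $d_1\leqslant d-4$ together with ``the one-dimensional part of $\Bs|I_{d-4}|$ has degree at most $2$''. This does not exclude threshold vectors such as $(2,2,d-2,d-2)$ or $(1,3,d-3,d-1)$ (both sum to $2d$ and respect $d_{-1}\leqslant d-1$); each must be killed by an explicit Hilbert-function computation for the forced complete intersection (e.g.\ $(2,2,d-2,d-2)$ yields $h_I(d-4)=4d-16>2d-7$). Worse, for vectors with $\sum_k d_k>2d$, such as $(2,2,d-1,d-1)$ or $(1,3,d-2,d-1)$, Corollary~\ref{complete_intersection_coincides_with_ideal} gives nothing, $I$ need not be a complete intersection, and your estimates leave $h_I(d-4)=2d-7$ with $I_1=0$ (no linear form to descend) entirely open. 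Eliminating these requires the hypothesis $\#\Sing(X)\leqslant 2(d-2)(d-1)$, which bounds $\sum_{k=0}^{2d-4}h_I(k)$ and hence the whole $h$-vector; your proposal never invokes this bound, which is a reliable sign the second step cannot close as sketched. A second, independent gap sits in the descent: you assert that the generators of $I$ of degree at most $d-2$ lie in $\overline{J}$ ``since the only generators adjoined in \eqref{construction} have degree between $d-1$ and $2d-4$''. That is not a consequence of the definition \eqref{construction}, which builds $I_k$ degree by degree as the annihilator of $S_{2d-4-k}$ modulo $W$; the equality $I_k=\overline{J}_k$ for $k\leqslant d-2$ is itself a nontrivial lemma of Kloosterman resting on saturation and duality properties of the Jacobian ideal. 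Without it the linear form (resp.\ the quadric) does not lift to $J$, and the construction of the complete intersection inside $\Sing(X)\subset\IP^4$ collapses.
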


The following lemma was also proved by Kloosterman for hypersurfaces of degree $d\geqslant 7$. 
 
\begin{lem}\label{d=6}
	Suppose $d\geqslant 6$. Let $X \subset \IP^4$ be a nodal hypersurface of degree $d$ with positive defect. Let~$I$ be the ideal defined in \emph{(\ref{construction})}.
	
		If $h_I(d-4)>2d-7$ then $\Sing(X)$ consists of at least $2(d-1)(d-2)+1$ points.
\end{lem}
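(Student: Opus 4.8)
The plan is to bound $\#\Sing(X)$ below by $\sum_{k}h_I(k)$ and then to force this sum past $2(d-2)(d-1)$. I begin with two structural observations about $I$. First, the construction gives $I\supset\overline{J}$, so $h_I(k)\leqslant h_{\overline J}(k)$ for every $k$; and since the general linear form $x_4$ is a non-zero-divisor on $R/J$ (it avoids the reduced points of $\Sing(X)$), the sequences $0\to(R/J)_{k-1}\xrightarrow{\cdot x_4}(R/J)_k\to(S/\overline J)_k\to 0$ give $h_{\overline J}(k)=h_J(k)-h_J(k-1)$, which telescopes to $\sum_{k}h_{\overline J}(k)=\#\Sing(X)$. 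Hence
\begin{equation*}
\#\Sing(X)\ \geqslant\ \sum_{k=0}^{2d-4}h_I(k).
\end{equation*}
Second, since $\Jac\subset J$, the images in $S$ of the partials $\partial F/\partial x_i$ lie in $\overline J_{d-1}\subset I_{d-1}$, and their common zeros in $\Proj(S)=\IP^3$ form $\Sing(X)\cap\{x_4=0\}=\varnothing$; thus $\Bs\,|I_{d-1}|=\varnothing$.

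With these in hand I invoke Section~\ref{4}. As $\Bs\,|I_{d-1}|=\varnothing$ and $h_I(d-4)>2d-7$, Proposition~\ref{old} supplies conditions (2) and (3) of Lemma~\ref{auxiliary_cii}. For condition (1) I argue that $\dim I_1\leqslant 1$: otherwise $S/I$ would be a quotient of a polynomial ring in two variables, forcing $h_I(d-4)\leqslant d-3\leqslant 2d-7$ and contradicting the hypothesis; hence $h_I(1)\geqslant 3$. Lemma~\ref{auxiliary_cii} then yields $\sum_k h_I(k)>2(d-2)(d-1)$, and therefore the claim, unless the $h$-vector of $I$ is one of the exceptional sequences listed there, which occur only for $d=6$ and $d=7$.

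It remains to eliminate these sequences. For $d=6$, Corollary~\ref{136676631} disposes of $(1,3,6,6,7,6,6,3,1)$ and $(1,3,6,6,8,6,6,3,1)$, while $(1,3,6,7,6,7,6,3,1)$ and $(1,3,7,6,6,6,7,3,1)$ are excluded by Theorem~\ref{stanley} because their first-half difference sequences $(1,2,3,1,-1)$ and $(1,2,4,-1,0)$ are not $O$-sequences. This leaves the ``plateau'' cases $(1,3,6,6,6,6,6,3,1)$ and $(1,4,6,6,6,6,6,4,1)$ for $d=6$, and $(1,3,6,8,8,8,8,8,6,3,1)$ for $d=7$.

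For each plateau sequence I run the base-locus argument, which is the crux of the proof. Starting from $\Bs\,|I_{d-1}|=\varnothing$, I apply Proposition~\ref{new} repeatedly across the constant middle range: whenever $h_I(k)=h_I(k-1)=h_I(k-2)$, emptiness of $\Bs\,|I_k|$ descends to $\Bs\,|I_{k-1}|$. Carrying this down to the first degree $a+1$ where the plateau begins gives $\Bs\,|I_{a+1}|=\varnothing$, i.e. $d_{-1}\leqslant a+1$ in the notation of Lemma~\ref{technical_lemma} ($n=3$, $N=2d-4$). On the other hand, $d_2$ is small: it is the least degree in which $I$ is nonzero, namely $1$ when $h_I(1)<\dim S_1$ (a single linear form cuts out a plane) and $2$ otherwise (a single quadric cuts out a surface). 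Substituting into the bound $\sum_{k=-1}^{2}d_k\geqslant 2d$ of Lemma~\ref{technical_lemma} then gives a strict violation in every case; for instance $(1,3,6,6,6,6,6,3,1)$ yields $d_{-1}+d_0+d_1+d_2\leqslant 3+3+3+1=10<12$. So no plateau sequence is realized, and the sum therefore exceeds $2(d-2)(d-1)$. The main obstacle is exactly this final coupling: wringing the strongest possible bound out of Proposition~\ref{new} and checking, for each remaining $h$-vector, that it is incompatible with the counting inequality of Lemma~\ref{technical_lemma}.
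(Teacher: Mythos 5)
Your proof is correct and follows essentially the same route as the paper's: bound $\#\Sing(X)$ below by $\sum_k h_I(k)$, feed Proposition~\ref{old} and the emptiness of $\Bs\,|I_{d-1}|$ into Lemma~\ref{auxiliary_cii}, kill the non-plateau exceptional $h$-vectors by Stanley/Macaulay (the paper instead cites unimodality for $v_2,v_3$, but your direct difference-sequence check via Theorem~\ref{stanley} is equivalent and valid since $h_1=3$), and kill the plateau vectors by iterating Proposition~\ref{new} against the counting bound of Lemma~\ref{technical_lemma}. Your local variants are all sound and in two places slightly cleaner than the paper's — the two-variable dimension count for $h_I(1)\geqslant 3$ replaces an appeal to Corollary~\ref{complete_intersection_coincides_with_ideal}, and iterating Proposition~\ref{new} twice in the $d=7$ case gives $d_{-1}\leqslant 4$, hence $\sum d_k\leqslant 1+4+4+4=13<14$ outright, avoiding the borderline multidegree $(1,3,5,5)$ that the paper must exclude separately — just phrase the descent carefully: Proposition~\ref{new} needs three consecutive equal values, so it stops at one more than the first plateau degree (which is indeed the bound your explicit sums use), not at the plateau's first degree itself.
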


We split the proof of Lemma \ref{d=6} into two parts. The first part, due to Kloosterman, establishes certain restrictions on the values of the corresponding $h$-vector.  In the second part, we apply results on unimodality to eliminate most of the possible sequences with small values.  Then, using Macaulay's theorem for hyperplane sections, we obtain the necessary inequality for the number $\#\Sing(X)$ of nodes on $X$.

\begin{prop}\label{base_locus}
	Let $X\subset \IP^4$ be a non-factorial nodal hypersurface of degree $d$. Let $I$ be the ideal defined in \emph{(\ref{construction})}. Then the base locus $\Bs|I_{d-1}|$ is empty.
\end{prop}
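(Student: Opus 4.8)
The plan is to exhibit inside $I_{d-1}$ a collection of degree $(d-1)$ forms whose common zero locus in $\IP^3=\Proj(S)$ is already empty; since enlarging a linear system can only shrink its base locus, this will force $\Bs|I_{d-1}|=\varnothing$. The natural candidates are the restrictions to the general hyperplane $\{\ell=0\}$ of the partial derivatives of $F$, since these are exactly the degree $(d-1)$ generators of the Jacobian ideal.

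First I would record the relevant chain of inclusions. By the construction (\ref{construction}) one has $\overline{J}\subseteq I$: for $g\in\overline{J}_k$ with $k\leqslant 2d-4$ we get $g\cdot S_{2d-4-k}\subseteq \overline{J}_{2d-4}\subseteq W$, hence $g\in I_k$. Since $J$ is the saturation of the Jacobian ideal $\Jac=(\partial_0 F,\dots,\partial_4 F)$, we also have $\Jac\subseteq J$, and therefore $\overline{\Jac}\subseteq\overline{J}\subseteq I$. Taking degree $(d-1)$ parts yields $\overline{\Jac}_{d-1}\subseteq I_{d-1}$. As $\Jac$ is generated in the single degree $d-1$ by the five partials $\partial_i F$, the space $\overline{\Jac}_{d-1}$ is spanned by their images $\overline{\partial_i F}$ in $S$.

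Next I would identify the common zero locus of these forms. A point of $\IP^3=\{x_4=0\}$ lies in $V(\overline{\partial_0 F},\dots,\overline{\partial_4 F})$ precisely when all five partials of $F$ vanish at the corresponding point of the hyperplane, that is, exactly on $\Sing(X)\cap\{\ell=0\}$. Because $\{\ell=0\}$ is a general hyperplane chosen not to pass through the finitely many nodes of $X$, this intersection is empty, so $V(\overline{\Jac}_{d-1})=\varnothing$ in $\IP^3$. Using $\overline{\Jac}_{d-1}\subseteq I_{d-1}$, I would then conclude $\Bs|I_{d-1}|\subseteq V(\overline{\Jac}_{d-1})=\varnothing$, which is the claim.

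The argument is short, and the one point demanding care is this: $\overline{J}$, being the image of a saturated ideal that defines the empty scheme after the general hyperplane cut, need not have all of its generators in degrees $\leqslant d-1$, so one cannot conclude the vanishing of $\Bs|I_{d-1}|$ directly from $\overline{J}$. The resolution—and the crux of the proof—is to work instead with the Jacobian ideal, which is generated in the single degree $d-1$ and already cuts out $\Sing(X)$ set-theoretically, so its restricted generators alone make the base locus empty. I would also remark that the hypothesis that $X$ be non-factorial is needed only to guarantee, via Remark \ref{defect_and_factoriality} and Proposition \ref{defect}, that the ideal $I$ of (\ref{construction}) is well defined (i.e. that $h_J(2d-4)>0$, so that $W$ can be chosen of codimension $1$); the emptiness of the base locus is then a purely scheme-theoretic consequence of the generality of $\ell$.
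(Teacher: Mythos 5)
Your proposal is correct and follows essentially the same route as the paper: both arguments pass through the chain $\Jac\subset J$, hence $\overline{\Jac}_{d-1}\subseteq \overline{J}_{d-1}\subseteq I_{d-1}$, and use that the Jacobian ideal is generated in degree $d-1$ while the general hyperplane $\{\ell=0\}$ misses $\Sing(X)$, so the restricted partials already have empty common zero locus. Your additional verifications (that $\overline{J}\subseteq I$ follows from the construction, and that non-factoriality is only needed for $W$ to exist) are accurate but not points where the paper's proof differs.
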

\begin{proof}
	Let $\Jac$ be the Jacobian ideal of $X$. Then $\Jac$ is generated by polynomials of degree $d-1$. Note, that $\Jac\subset J\subset R$. Since $(\Jac,x_4)$ defines an empty set due to the choice of the hyperplane section. So, we have an inclusion $\Bs|(J,x_4)_{d-1}|\subset \Bs|(\Jac,x_4)_{d-1}|=\varnothing$. Therefore $\Bs|\overline{J}_{d-1}|$ is empty. Finally, since $I$ contains $\overline{J}$, the reverse inclusion for the base loci holds $\Bs|I_{d-1}|\subset \Bs|\overline{J}_{d-1}|=\varnothing$. 
\end{proof}

Now we can prove Lemma \ref{d=6}.
\begin{proof}[Proof of Lemma \ref{d=6}]
	According to Proposition \ref{defect}, the number of singular points on $X$ can be estimated through the Hilbert function, as
	\[ \#\Sing(X)=p_J(2d-4) \geqslant h_J(2d-4),\]
	where $J$ is the homogeneous ideal of $\Sing(X)$ and $p_J(\cdot)$ is the Hilbert polynomial. Using the exact sequence for a general hyperplane section ideal $\overline{J}$ as in the formula (\ref{short_exact_sequence}), we can express $h_J(2d-4)$ in terms of $h_{\overline{J}}$
	\begin{equation*}
		h_J(2d-4)=h_{\overline{J}}(2d-4)+h_J(2d-5)=\dots=\sum\limits_{k=0}^{2d-4}h_{\overline{J}}(k).
	\end{equation*}
	For  the homogeneous ideal $I\supset \overline{J}$ defined in (\ref{construction}) we have the following	
	\begin{equation*}
		\sum\limits_{k=0}^{2d-4}h_{\overline{J}}(k)\geqslant \sum\limits_{k=0}^{2d-4}h_I(k),
	\end{equation*}
	
Note that $h_I(1)\geqslant 3$ due to Lemma \ref{technical_lemma}. Indeed, let $d_k$ be the smallest integer number $t\geqslant 0$ such that the dimension of $\Bs|I_t|$ is less than $k$. If $h_I(1) < 3$, then $\sum d_k\leqslant 2d$ and equality holds for 
$$(d_2,d_1,d_0,d_{-1})=(1,1,d-1,d-1).$$
According to Remark \ref{ci_ideal_inside_ag_ideal}, there is a complete intersection ideal $I_{CI}\subset I$ of multidegree $(d_2,d_1,d_0,d_{-1})$. Corollary \ref{complete_intersection_coincides_with_ideal} implies that $I=I_{CI}$. However, these ideals have different Hilbert functions, as \mbox{$h_I(1)\geqslant 3$}, while $h_{I_{CI}}(1)=2$.

	Since $h_I(d-4)>2d-7$ by assumption and $\Bs|I_{d-1}|$ is empty by Proposition \ref{base_locus}, so due to Lemma~\ref{auxiliary_cii} one has 
\begin{equation}\label{inequality_final}
		\#\Sing(X)\geqslant \sum\limits_{k=0}^{2d-4} h_I(k)>2(d-2)(d-1)\tag{5.2}
	\end{equation}
	except when $d=7$ and $h$-vector is $(1,3,6,8,8,8,8,8,6,3,1)$ or $d=6$ and $h$-vector is one of the following
		$$(1,3,6,6,6,6,6,3,1),\hspace{10pt} v_1=(1,3,6,6,7,6,6,3,1), \hspace{10pt}v_2=(1,3,6,7,6,7,6,3,1), $$
		 $$v_3=(1,3,7,6,6,6,7,3,1), \hspace{10pt} (1,4,6,6,6,6,6,4,1), \hspace{10pt} v_4=(1,3,6,6,8,6,6,3,1).$$
However, $h$-vectors $v_1$, $v_2$, $v_3$, and $v_4$ never realize since $v_2$ and $v_3$ are not unimodal and $v_1$ and $v_4$ do not correspond to any artinian Gorenstein ring by Corollary \ref{136676631}.		 

So, it remains to exclude the $h$-vector $(1,3,6,8,8,8,8,8,6,3,1)$ for $d=7$ and $(1,3,6,6,6,6,6,3,1)$, $(1,4,6,6,6,6,6,4,1)$ for $d=6$.

	Again, by Remark \ref{ci_ideal_inside_ag_ideal} there is a complete intersection ideal $I_{CI}\subset I$ of multidegree $(d_2,d_1,d_0,d_{-1})$. The base locus $\Bs|I_{d-1}|$ is empty (see Proposition \ref{base_locus}). So, it implies $d_{-1}\leqslant d-1$.
	
	\emph{Case 1.} Consider $d=7$ and $h$-vector $(1, 3, 6, 8, 8, 8, 8, 8, 6, 3, 1)$. From Proposition \ref{new}, it follows that $\Bs\,|I_5|$ is empty and therefore $d_{-1}\leqslant 5$. Since $h_I(1)=3$, the ideal $I$ has only one generator $f$ of degree 1. Hence, the base locus of $|I_1|$ and of $|I_2|$ is a hyperplane and $d_2=1$. The base locus of $|I_3|$ has dimension at most 1, because $\dim I_3>\dim (f)_3$. Consequently, there is the following chain of inequalities
	 $$1=d_2 \leqslant  d_1\leqslant 3 \leqslant d_0 \leqslant d_{-1} \leqslant 5.$$
	 If $\sum\limits_{i=-1}^2 (d_i -1)$, i.e. the socle degree of $I_{CI}$, is less than $2d$, we are done by Lemma \ref{technical_lemma}. Otherwise, when $(d_2,d_1,d_0,d_{-1})=(1,3,5,5)$, the ideals $I_{CI}$ and $I$ must coincide due to their equal socle degree by Corollary \ref{complete_intersection_coincides_with_ideal}, contradicting their different Hilbert functions.
	 
	 \emph{Case 2.} Consider $d=6$ and $h$-vector $(1,3,6,6,6,6,6,3,1)$ or $(1,4,6,6,6,6,6,4,1)$. Proposition \ref{new} shows emptyness of $\Bs\,|I_3|$. So, we have  $d_2 \leqslant d_1 \leqslant d_0 \leqslant d_{-1} \leqslant 3$. Then we compute the socle degree of $I_{CI}$ which is
	 $$\sum\limits_{i=-1}^{2}(d_i -1)\leqslant 8 <2d$$
	  and get a contradiction.
	 
\end{proof} 

Our modification of Lemma \ref{d=6} makes Kloosterman's proof (see \cite[Theorem 5.5]{kloosterman2022maximal}) suitable for~$d=6$~too. Recall that the singular locus of $X$ is equipped with the scheme structure since it is defined by the Jacobian ideal (see \cite[Definition V-21]{eisenbud2006geometry} for details).

\begin{prop} \label{reduced}
	If $X\subset\IP^{n}$ is a nodal hypersurface, then the scheme $\Sing(X)$ is reduced.
\end{prop}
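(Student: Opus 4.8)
The plan is to verify reducedness locally at each point of the scheme $\Sing(X)=V(\Jac)$. Since reducedness is a stalk-wise condition and $\Sing(X)$ is supported on the finite set of nodes of $X$, it suffices to show that for every node $p$ the local ring $\mathcal{O}_{\Sing(X),p}$ is a field; away from the nodes the stalk vanishes and there is nothing to check. I would therefore fix a node $p$, pass to an affine chart of $\IP^n$ containing it, say $\{x_0=1\}$, write $f(x_1,\dots,x_n)=F(1,x_1,\dots,x_n)$ for the dehomogenized equation, and take $p$ to be the origin.

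The key geometric input is the definition of a node: $p$ is an ordinary double point of $X$, which means $f(p)=0$, $\nabla f(p)=0$, and the Hessian matrix $H=\big(\tfrac{\partial^2 f}{\partial x_i\partial x_j}(p)\big)_{1\leqslant i,j\leqslant n}$ is nondegenerate. First I would record that the dehomogenized Jacobian ideal contains the $n$ affine partials $\tfrac{\partial f}{\partial x_1},\dots,\tfrac{\partial f}{\partial x_n}$, since $\tfrac{\partial F}{\partial x_i}(1,x_1,\dots,x_n)=\tfrac{\partial f}{\partial x_i}$ for $i\geqslant 1$. Each $\tfrac{\partial f}{\partial x_i}$ lies in the maximal ideal $\mathfrak{m}_p\subset\mathcal{O}_{\mathbb{A}^n,p}$ because $\nabla f(p)=0$, and its class in $\mathfrak{m}_p/\mathfrak{m}_p^2$ is the linear form $\sum_j H_{ij}\,\bar{x}_j$.

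Next, nondegeneracy of $H$ says precisely that these $n$ linear forms are a basis of the $n$-dimensional space $\mathfrak{m}_p/\mathfrak{m}_p^2$. By Nakayama's lemma the partials $\tfrac{\partial f}{\partial x_1},\dots,\tfrac{\partial f}{\partial x_n}$ then generate $\mathfrak{m}_p$ itself, so that $\mathcal{O}_{\mathbb{A}^n,p}/(\tfrac{\partial f}{\partial x_1},\dots,\tfrac{\partial f}{\partial x_n})\simeq\C$. Since the local ideal of $\Sing(X)$ at $p$ contains $(\tfrac{\partial f}{\partial x_1},\dots,\tfrac{\partial f}{\partial x_n})=\mathfrak{m}_p$ and is proper (because $p\in\Sing(X)$), it must equal $\mathfrak{m}_p$; hence $\mathcal{O}_{\Sing(X),p}\simeq\C$ is a field and in particular reduced. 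As this holds at every node, $\Sing(X)$ is reduced.

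I do not expect a genuine obstacle here: the content is entirely local and rests on translating \emph{node} into \emph{nondegenerate Hessian}. The only point demanding a little care is bookkeeping with the projective Jacobian ideal, which has $n+1$ generators and a priori could impose more conditions than the $n$ affine partials. However, adding generators only enlarges the defining ideal and hence can only shrink the stalk, so the extra generator $\tfrac{\partial F}{\partial x_0}$ (which by Euler's relation $\sum_i x_i\tfrac{\partial F}{\partial x_i}=d\,F$ contributes $d\,f$ modulo the other partials) is harmless once we know that $n$ of the generators already cut out a reduced point.
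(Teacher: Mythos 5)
Your proof is correct and follows essentially the same route as the paper: both reduce to checking that at each node the nondegeneracy of the Hessian forces the partial derivatives to cut out a reduced point locally. The only differences are cosmetic --- the paper first derives the nondegenerate Hessian from the analytic normal form $\sum z_i^2=0$ via a change-of-coordinates computation and then appeals to transversality, whereas you take the Hessian characterization of a node as the starting point and finish algebraically via Nakayama's lemma, which if anything makes the concluding step cleaner.
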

\begin{proof}
Let $U=\mathbb{A}^n$ be an affine chart containing $P\in \Sing(X)$. Then in this chart the hypersurface~$X$ is given by an equation $\bar{F}=0$, where $\bar{F}\in \C[y_1,\dots,y_n]$. Since $P$ is a node, we can find an analytic neighborhood of $P$ and local coordinates $\{z_i\}_{i=1,\dots, n}$ such that the equation $\bar{F}=0$ is 
	\begin{equation}\label{odt}
				\sum\limits_{i=1}^{n} z_i^2=0\tag{5.3}.
	\end{equation}
	By considering equation (\ref{odt}), we find all derivatives
	$$ \frac{\partial}{\partial z_j}\big(\sum\limits_{i=1}^n z_i^2\big)=2z_j$$
	and conclude the Hessian matrix
	\begin{equation*}
		\bigg( \frac{\partial^2}{\partial z_j z_k}\big(\sum\limits_{i=1}^n z_i^2\big) \bigg)
	\end{equation*}
	is invertible at $P$.
	
	Now, it remains to show that an intersection 
	$$\{\frac{\partial \bar{F}}{\partial y_1}=\dots= \frac{\partial \bar{F}}{\partial y_n}=0\}$$
	is transversal at $P$, i.e. the gradient vectors 
	$$(\frac{\partial^2 \bar{F}}{\partial y_1 \partial y_i}, \dots, \frac{\partial^2 \bar{F}}{\partial y_n \partial y_i})$$
	are linearly independent or, equivalently, the Hessian matrix $(\frac{\partial^2 \bar{F}}{\partial y_i \partial y_j})$ is non-degenerate at $P$. Note that
	\begin{equation*}
		\frac{\partial^2 \bar{F}}{\partial y_i \partial y_j}=\sum\limits_{k=1}^n\big( \sum\limits_{l=1}^n \frac{\partial^2 \bar{F}}{\partial z_l \partial z_k} \frac{\partial z_l}{\partial y_i} \frac{\partial z_k}{\partial y_j}+ \frac{\partial \bar{F}}{\partial z_k} \frac{\partial^2 z_k}{\partial y_i \partial y_j} \big).
	\end{equation*}
Since $P$ is singular, all $\frac{\partial \bar{F}}{\partial z_k}(P)$ vanish and we have 
$$\bigg(\frac{\partial^2 \bar{F}}{\partial y_i \partial y_j}(P)\bigg)=\bigg( \frac{\partial z_l}{\partial y_i} (P) \bigg)\bigg( \frac{\partial^2 \bar{F}}{\partial z_i \partial z_j}(P)\bigg) \bigg( \frac{\partial z_k}{\partial y_j}(P)\bigg).$$ 
Thus, the left hand side is a non-degenerate matrix as a product of invertible ones. So, each point \mbox{$P\in \Sing(X)$} is a transversal intersection of hypersurfaces and therefore $\Sing(X)$ is reduced.
\end{proof}

\begin{prop}\label{local_coordinates_and_nodes}
	Let $X$ be a nodal hypersurface in $\IP^n$ which is given by an equation $F=0$. Let $f_1,\dots,f_n$ be a regular sequence. Suppose the scheme $\Sing(X)$ contains a complete intersection 
	$$\Sigma= \{f_1=f_2=\dots= f_n=0\}$$
	 such that $$F=\sum\limits_{i=1}^n \varphi_i f_i$$ for some homogeneous polynomials $\varphi_i$. Then $\varphi_i(P)=0$ for all $P\in \Sigma$ and all $i=1,\dots,n$.
\end{prop}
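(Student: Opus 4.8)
The plan is to differentiate the identity $F=\sum_{i=1}^n\varphi_i f_i$ at a point $P\in\Sigma$, using on one hand that $X$ is singular at $P$ and on the other hand that $\Sigma$ is nondegenerate there. Fix an affine chart $\mathbb{A}^n\subset\IP^n$ containing $P$ and write $\bar g$ for the dehomogenization of a homogeneous polynomial $g$; dehomogenization is a ring homomorphism, so $\bar F=\sum_i\bar\varphi_i\bar f_i$. Differentiating gives
\begin{equation*}
	\frac{\partial \bar F}{\partial y_j}(P)=\sum_{i=1}^n\Bigl(\frac{\partial\bar\varphi_i}{\partial y_j}(P)\,\bar f_i(P)+\bar\varphi_i(P)\,\frac{\partial\bar f_i}{\partial y_j}(P)\Bigr),\qquad j=1,\dots,n.
\end{equation*}
Since $P\in\Sigma$ we have $\bar f_i(P)=0$ for every $i$, so the first summand vanishes; and since $P\in\Sigma\subseteq\Sing(X)$ the left-hand side vanishes as well. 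Collecting the $n$ equations, this says exactly that $\sum_{i=1}^n\varphi_i(P)\,\nabla\bar f_i(P)=0$, i.e.\ the gradients $\nabla\bar f_1(P),\dots,\nabla\bar f_n(P)$ satisfy a linear relation whose coefficients are the scalars $\varphi_i(P)$.

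It remains to show that these $n$ gradient vectors in $\C^n$ are linearly independent, for then all coefficients $\varphi_i(P)$ must vanish. Equivalently, I must show that the Jacobian matrix $\bigl(\partial\bar f_i/\partial y_j(P)\bigr)$ is invertible, which is precisely the condition that the $0$-dimensional complete intersection $\Sigma$ be smooth (reduced) at $P$. This I deduce from Proposition \ref{reduced}: the nodes of $X$ are isolated, so $\Sing(X)$ is a $0$-dimensional scheme, and being reduced its local ring $\mathcal{O}_{\Sing(X),P}$ is the field $\C$. The scheme-theoretic containment $\Sigma\subseteq\Sing(X)$ induces a surjection $\mathcal{O}_{\Sing(X),P}\twoheadrightarrow\mathcal{O}_{\Sigma,P}$; a field has no proper nonzero quotients, so $\mathcal{O}_{\Sigma,P}=\C$ and $\Sigma$ is reduced at $P$. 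Hence the Zariski tangent space of $\Sigma$ at $P$, which is the common kernel of the differentials of the $\bar f_i$ and thus the kernel of the Jacobian, is zero, giving the desired full rank.

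Putting the two steps together, the homogeneous system $\sum_i\varphi_i(P)\,\nabla\bar f_i(P)=0$ has only the trivial solution, so $\varphi_i(P)=0$ for all $i$ and all $P\in\Sigma$, and since $P$ lies in the chart this equality holds for the homogeneous $\varphi_i$ as well. I expect the only delicate point to be the middle paragraph: one must be careful that $\Sigma\subseteq\Sing(X)$ yields a \emph{quotient} of the local ring rather than a subring, and that the combination of $0$-dimensionality and reducedness makes $\mathcal{O}_{\Sing(X),P}$ an actual field, which is what forbids any nontrivial (e.g.\ nilpotent) structure on $\Sigma$ at $P$; once the Jacobian is known to be nonsingular, the remaining linear-algebra conclusion is immediate.
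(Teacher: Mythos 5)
Your proof is correct and takes essentially the same route as the paper's: both differentiate the identity $F=\sum_i\varphi_i f_i$ at $P\in\Sigma$, use that the $\bar f_i$ and $\nabla\bar F$ vanish there, and reduce everything to the invertibility of the Jacobian $\bigl(\partial\bar f_i/\partial y_j(P)\bigr)$, which is extracted from the reducedness of $\Sing(X)$ (Proposition~\ref{reduced}). The only difference is presentational — the paper takes the $\bar f_i$ as local coordinates and reads off $\partial\bar F/\partial\bar f_i(P)=\bar\varphi_i(P)$, while you stay in the standard coordinates and conclude from the linear independence of the gradients $\nabla\bar f_i(P)$.
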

\begin{proof}
	Consider an affine chart $U$ containing $P\in \Sigma$. Without loss of generality, we can assume that~$U=\{x_0\neq 0\}$. Denote by
$\bar{f_1}, \dots, \bar{f_n}$ the polynomials in $\C[y_1,\dots,y_n]$ defined as
$$\bar{f_i}(y_1,\dots,y_n)=f_i(1,y_1,\dots,y_n),$$
where $i=1,\dots,n$ and $y_1=\frac{x_1}{x_0},\dots, y_n=\frac{x_n}{x_0}$. Note that $\Sigma\subset \IP^n$ is a 0-dimentional subscheme due to the regularity of the sequence $f_1,\dots,f_n$. Thus, it remains 0-dimensional in the affine chart $U$.

First, we show that $\bar{f_1},\dots,\bar{f_n}$ form a local system of coordinates at some neighborhood of~$P\in \Sigma$. It~suffices to show that gradient vectors 
\begin{equation*}\label{gradient_vectors}\tag{5.4}
(\frac{\partial }{\partial y_1}\bar{f}_i, \hspace{5pt} \dots, \hspace{5pt} \frac{\partial }{\partial y_n}\bar{f}_i), \hspace{50pt}i=1,\dots,n,	
\end{equation*}
are linearly independent at $P$. Suppose the contrary. On the one hand, a matrix made up of vectors~\eqref{gradient_vectors} has rank less than $n$ at the point $P$. On the other hand, this matrix is the Jacobi matrix of the 0-dimensional subscheme $\bar{\Sigma}\subset \mathbb{A}^n$ given by $\bar{f_1},\dots, \bar{f_n}$. So, the point $P$ must be singular on $\bar{\Sigma}$. However, $\bar{\Sigma}$ is reduced as a subscheme of $\Sing(X)$ by Proposition \ref{reduced} and therefore it must be smooth. Thus we get a contradiction. Then $\bar{f_1}, \dots, \bar{f_n}$ are local coordinates.

Now, let $\bar{F}$ and $\bar{\varphi_i}$ the polynomials corresponding to $f$ and $\varphi_i$ in~$U$. Then the equation of $X$ in the chart $U$ can be written as
	$$\bar{F}=\sum\limits_{i=1}^n \bar{\varphi_i}\bar{f_i}=0.$$
	 Moreover, for its singular point $P\in \Sigma$ and local coordinates $\bar{f_1},\dots,\bar{f_n}$ at $P$ the following condition is satisfied 
	 \begin{equation*}\label{derivatives_vanish}
	 	\dfrac{\partial \bar{F}}{\partial \bar{f_1}}(P)=\dots=\dfrac{\partial \bar{F}}{\partial \bar{f_n}}(P)=0.\tag{5.5}
	 \end{equation*}
	 For every $i=1,\dots,n$, the calculations show
	 \begin{equation*}
	 	\dfrac{\partial \bar{F}}{\partial \bar{f_i}}(P)=\bar{\varphi_i}(P)+\dfrac{\partial \bar{\varphi_i}}{\partial \bar{f_i}}(P) \bar{f_i}(P).
	 \end{equation*}
	However, $\bar{f_i}(P)=0$ because $P\in X$ is a point where all $f_j$ vanish simultaneously. Then $\bar{\varphi_i}(P)$ must be equal to $0$ due to condition \eqref{derivatives_vanish} implying that $\varphi_i(P)=0$  for all $i$ and all $P\in \Sigma$. 
\end{proof}

\begin{proof}[Proof of Theorem \ref{main}]
	If $X$ is not factorial, then due to Lemma \ref{contains_ci} the subscheme $\Sing(X)$ contains a complete intersection $\Sigma$ of multidegree either $(1,1,d-1,d-1)$ or $(1,2,d-2,d-1)$. Consequently, we have $\#\Sing(X)\geqslant (d-1)^2$ if $\Sigma$ has multidegree $(1,1,d-1,d-1)$ or $\#\Sing(X)=2(d-2)(d-1)$ if $\Sigma$ has multidegree $(1,2,d-2,d-1)$.
	
	Let $f_1,\dots,f_4$ be a regular sequence which generates an ideal $I(\Sigma)$ defining $\Sigma$. Thus it is generated by all homogeneous polynomials vanishing on $\Sigma$. Let $F=0$ be the equation of $X$ in $\IP^4$. Therefore, there is an inclusion $F\in I(\Sigma)$ and, in particular, for some polynomials $\varphi_i$ one has 
	$$F=\sum\limits_{i=1}^4 \varphi_i f_i.$$
	
	By Proposition \ref{local_coordinates_and_nodes}, each $\varphi_i$ vanishes on $\Sigma$. Hence, all $\varphi_i$ are also elements of $I(\Sigma)$, i.e. 
	$$\varphi_i=\sum\limits_{j=1}^4 g_{ij} f_j$$
	for some polynomials $g_{ij}$.

	Now, suppose that $$\deg f_1 \leqslant \dots \leqslant \deg f_4,$$
	implying $\deg f_j >2$ for $j>2$ and $d-2 \leqslant \deg f_3 \leqslant \deg f_4$. To express $F$ of degree $d$ as 
	$$F=\sum\limits_{i=1}^4\sum\limits_{j=1}^i g_{ij}f_i f_j$$
	requires that	
	$$d=\deg F\geqslant \deg f_i +\deg f_j \geqslant 2 \deg f_j.$$
	Comparing with 
	$$2\deg f_3\geqslant 2d-4 >d=\deg F,$$
	 we conclude that $\deg f_j \leqslant 2$ and $j\leqslant 2$. 
	 
	 Therefore, $F\in (f_1,f_2)$, and there are two possibilities for~$X$. If~$ \deg f_1=\deg f_2 =1$, then $X$~contains a plane. If $\deg f_1=1$ and $\deg f_2=2$, then $X$ contains a quadric surface.
\end{proof}

\bibliographystyle{alpha}

\begin{thebibliography}{MNZ08}

\bibitem[BH98]{bruns1998cohen}
W.~Bruns and H.J. Herzog.
\newblock {\em Cohen--Macaulay rings}.
\newblock Number~39 in Cambridge Studies in Advanced Mathematics. Cambridge University Press, 1998.

\bibitem[CDG04]{ciliberto2004factoriality}
C.~Ciliberto and V.~Di~Gennaro.
\newblock Factoriality of certain hypersurfaces of $\IP^4$ with ordinary double points.
\newblock In {\em Algebraic {T}ransformation {G}roups and {A}lgebraic {V}arieties: {P}roceedings of the conference {I}nteresting {A}lgebraic {V}arieties {A}rising in {A}lgebraic {T}ransformation {G}roup {T}heory held at the {E}rwin {S}chr{\"o}dinger {I}nstitute, {V}ienna, {O}ctober 22--26, 2001}, pages 1--7. Springer, 2004.

\bibitem[Che05]{cheltsov2005birationally}
I.A. Cheltsov.
\newblock Birationally rigid {F}ano varieties.
\newblock {\em Russian Mathematical Surveys}, 60(5):875--965, 2005.

\bibitem[Che06]{cheltsov2006nonrational}
I.~Cheltsov.
\newblock Nonrational nodal quartic threefolds.
\newblock {\em Pacific Journal of Mathematics}, 226(1):65--81, 2006.

\bibitem[Che10a]{cheltsov2010factoriality}
I.~Cheltsov.
\newblock Factorial threefold hypersurfaces.
\newblock {\em Journal of Algebraic Geometry}, 19:781--791, 2010.

\bibitem[Che10b]{cheltsov2010conjecture}
I.A. Cheltsov.
\newblock On a conjecture of {C}iliberto.
\newblock {\em Sbornik: Mathematics}, 201(7):1069--1090, 2010.

\bibitem[Dim90]{dimca1990}
A.~Dimca.
\newblock {Betti numbers of hypersurfaces and defects of linear systems}.
\newblock {\em Duke Mathematical Journal}, 60(1):\\285 -- 298, 1990.

\bibitem[EH00]{eisenbud2006geometry}
D.~Eisenbud and J.~Harris.
\newblock {\em The Geometry of Schemes}, Graduate Texts in Mathematics, volume 197.
\newblock Springer Science \& Business Media, 2000.

\bibitem[Eis95]{eisenbud2013commutative}
D.~Eisenbud.
\newblock {\em Commutative algebra: with a view toward algebraic geometry}, Graduate Texts in Mathematics, volume 150.
\newblock Springer Science \& Business Media, 1995.

\bibitem[FW89]{finkelnberg1989small}
H.~Finkelnberg and J.~Werner.
\newblock Small resolutions of nodal cubic threefolds.
\newblock {\em Nederl. Akad. Wetensch. Indag. Math.}, 51(2):185--198, 1989.

\bibitem[Got78]{gotzmann1978bedingung}
G.~Gotzmann.
\newblock Eine {B}edingung f{\"u}r die {F}lachheit und das {H}ilbertpolynom eines graduierten {R}inges.
\newblock {\em Mathematische Zeitschrift}, 158:61--70, 1978.

\bibitem[Hun99]{huneke1999hyman}
C.~Huneke.
\newblock Hyman {B}ass and ubiquity: {G}orenstein rings.
\newblock In {\em Algebra, $K$-theory, groups, and education}, pages 55--78, 1999.

\bibitem[Klo22]{kloosterman2022maximal}
R.~Kloosterman.
\newblock Maximal families of nodal varieties with defect.
\newblock {\em Mathematische Zeitschrift}, 300(2):1141--1156, 2022.

\bibitem[Mac27]{macaulay1927some}
F.~S. Macaulay.
\newblock Some properties of enumeration in the theory of modular systems.
\newblock {\em Proceedings of the London Mathematical Society}, 2(1):531--555, 1927.

\bibitem[MNZ08]{migliore2007characterization}
J.C. Migliore, U.~Nagel, and F.~Zanello.
\newblock A characterization of {G}orenstein {H}ilbert functions in codimension four with small initial degree.
\newblock {\em Mathematical Research Letters}, 15(2):331--349, 2008.

\bibitem[MR70]{mazur1970local}
B.~Mazur and L.~Roberts.
\newblock Local {E}uler characteristics.
\newblock {\em Inventiones mathematicae}, 9:201--234, 1970.

\bibitem[Otw02]{otwinowska2002}
A.~Otwinowska.
\newblock Sur la fonction de {H}ilbert des algèbres gradu{\'e}es de dimension 0.
\newblock {\em Journal für die reine und angewandte {M}athematik},
2002(545):97--119, 2002.

\bibitem[Shr07]{shramov2007factorial}
K.A. Shramov.
\newblock $\Q$-factorial quartic threefolds.
\newblock {\em Sbornik: Mathematics}, 198(8):1165--1174, 2007.

\bibitem[SS75]{storch1975spurfunktionen}
U.~Storch and G.~Scheja.
\newblock {\"U}ber {S}purfunktionen bei vollst{\"a}ndigen {D}urchschnitten.
\newblock {\em Journal für die reine und angewandte {M}athematik}, 1975(278-279):174--190, 1975.

\bibitem[Sta78]{stanleyhilbert}
R.~Stanley.
\newblock Hilbert functions of graded algebras.
\newblock {\em Advances in Mathematics}, 28(1):57--83, 1978.

\bibitem[Vas98]{vasconcelos2004computational}
W.~Vasconcelos.
\newblock {\em Computational methods in commutative algebra and algebraic geometry}, Algorithms and Computation in Mathematics, volume~2.
\newblock Springer Science \& Business Media, 1998.

\end{thebibliography}

\end{document}